\newtheorem{theorem}{Theorem}[section]
\newtheorem{lemma}[theorem]{Lemma}
\newtheorem{proposition}[theorem]{Proposition}
\newtheorem{corollary}[theorem]{Corollary}
\theoremstyle{definition}
\newtheorem{definition}[theorem]{Definition}
\newtheorem{example}[theorem]{Example}
\theoremstyle{remark}
\newtheorem{remark}[theorem]{Remark}
\numberwithin{equation}{section}
\begin{document}

\title[Exact and approximate operator parallelism]{Exact and approximate operator parallelism}

\author[M.S. Moslehian, A. Zamani]{Mohammad Sal Moslehian and Ali Zamani}

\address{Department of Pure Mathematics, Center of Excellence in
Analysis on Algebraic Structures (CEAAS), Ferdowsi University of
Mashhad, P.O. Box 1159, Mashhad 91775, Iran.}
\email{moslehian@um.ac.ir; moslehian@member.ams.org; zamani.ali85@yahoo.com}
\subjclass[2010]{47A30, 46L05, 46L08, 47B47, 15A60.}

\keywords{$C^*$-algebra; approximate parallelism; operator parallelism; Hilbert $C^*$-module}

\begin{abstract}
Extending the notion of parallelism we introduce the concept of approximate parallelism in normed spaces and then substantially restrict ourselves to the setting of Hilbert space operators endowed with the operator norm. We present several characterizations of the exact and approximate operator parallelism in the algebra $\mathbb{B}(\mathscr{H})$ of bounded linear operators acting on a Hilbert space $\mathscr{H}$. Among other things, we investigate the relationship between approximate parallelism and norm of inner derivations on $\mathbb{B}(\mathscr{H})$. We also characterize the parallel elements of a $C^*$-algebra by using states. Finally we utilize the linking algebra to give some equivalence assertions regarding parallel elements in a Hilbert $C^*$-module.
\end{abstract} \maketitle

\section{Introduction and preliminaries}
Let $\mathscr A$ be a $C^*$-algebra. An element $a\in\mathscr A$ is called \emph{positive} (we write $a\geq0$) if $a=b^* b$ for some $b\in\mathscr A$. If $a\in\mathscr A$ is positive, then exists a unique positive element $b\in\mathscr A$ such that $a=b^2$. Such an element $b$ is called \emph{the positive square root} of $a$. A linear functional $\varphi$ over $\mathscr A$ of norm one is called \emph{state} if $\varphi(a)\geq0$ for any positive element $a\in\mathscr A$. By $S(\mathscr A)$ we denote the set of all states of $\mathscr A$.\\
Throughout the paper, $\mathbb{K}(\mathscr{H})$ and $\mathbb{B}(\mathscr{H})$ denote the $C^*$-algebra of all compact operators and the $C^*$-algebra of all bounded linear operators on a complex Hilbert space $\mathscr{H}$ endowed with an inner product $(.\mid.)$, respectively. We stand $I$ for the identity operator on $\mathscr{H}$. Furthermore, For $\xi, \eta\in \mathscr{H}$, the rank one operator $\xi\otimes\eta$ on $\mathscr{H}$ is defined by $(\xi\otimes\eta)(\zeta)=(\zeta\mid\eta)\xi$. Note that by the Gelfand--Naimark theorem we can regard $\mathscr A$ as a $C^*$-subalgebra of $\mathbb{B}(\mathscr{H})$ for a complex Hilbert space $\mathscr{H}$. More details can be found e.g. in \cite{dix, mor}.\\
The notion of Hilbert $C^*$-module is a natural generalization of that of Hilbert space arising under replacement of the field of scalars $\mathbb{C}$ by a $C^*$-algebra. This concept plays a significant role in the theory of operator algebra and $K$-theory; see \cite{Man}.
Let $\mathscr A$ be a $C^*$-algebra. An inner product $\mathscr A$-module is a complex linear space $\mathscr{X}$ which is a right $\mathscr A$-module
with a compatible scalar multiplication (i.e., $\mu(xa) = (\mu x)a = x(\mu a)$ for all $x\in \mathscr{X}, a\in\mathscr A, \mu\in\mathbb{C}$) and equipped with an $\mathscr A$-valued inner product $\langle\cdot,\cdot\rangle \,: \mathscr{X}\times \mathscr{X}\longrightarrow\mathscr A$ satisfying\\
(i) $\langle x, \alpha y+\beta z\rangle=\alpha\langle x, y\rangle+\beta\langle x, z\rangle$,\\
(ii) $\langle x, ya\rangle=\langle x, y\rangle a$,\\
(iii) $\langle x, y\rangle^*=\langle y, x\rangle$,\\
(iv) $\langle x, x\rangle\geq0$ and $\langle x, x\rangle=0$ if and only if $x=0$,\\
for all $x, y, z\in \mathscr{X}, a\in\mathscr A, \alpha, \beta\in\mathbb{C}$. For an inner product $\mathscr A$-module $\mathscr{X}$ the Cauchy--Schwarz inequality holds (see \cite{MOS2} and references therein):
$$\|\langle x, y\rangle\|^2\leq\|\langle x, x\rangle\|\,\|\langle y, y\rangle\|\qquad(x, y\in \mathscr{X}).$$
Consequently, $\|x\|=\|\langle x, x\rangle\|^{\frac{1}{2}}$ defines a norm on $\mathscr{X}$. If $\mathscr{X}$ with respect to this norm is complete, then it is called a \emph{Hilbert $\mathscr A$-module}, or a \emph{Hilbert $C^*$-module} over $\mathscr A$. Complex Hilbert spaces are Hilbert $\mathbb{C}$-modules. Any $C^*$-algebra $\mathscr A$ can be regarded as a Hilbert $C^*$-module over itself via $\langle a, b\rangle :=a^* b$. For every $x\in \mathscr{X}$ the positive square root of $\langle x, x\rangle$ is denoted by $|x|$. If $\varphi$ be a state over $\mathscr A$, we have the following useful version of the Cauchy–-Schwarz inequality:
$$\varphi(\langle y, x\rangle)\varphi(\langle x, y\rangle)=|\varphi(\langle x, y\rangle)|^2\leq\varphi(\langle x, x\rangle)\varphi(\langle y, y\rangle)$$
for all $x, y\in \mathscr{X}$.\\
Let $\mathscr{X}$ and $\mathscr{Y}$ be two Hilbert $\mathscr A$-modules. A mapping $T\,:\mathscr{X}\longrightarrow \mathscr{Y}$ is called \emph{adjointable} if there exists a mapping
$S\,:\mathscr{Y}\longrightarrow \mathscr{X}$ such that $\langle Tx, y\rangle=\langle x, Sy\rangle$ for all $x\in \mathscr{X}, y\in \mathscr{Y}$. The unique mapping $S$ is denoted by $T^*$ and
is called the \emph{adjoint} of $T$. It is easy to see that $T$ must be bounded linear $\mathscr A$-module mapping. The space $\mathbb{B}(\mathscr{X},\mathscr{Y})$ of all adjointable maps between Hilbert $\mathscr A$-modules $\mathscr{X}$ and $\mathscr{Y}$ is a Banach space while
$\mathbb{B}(\mathscr{X}) :=\mathbb{B}(\mathscr{X},\mathscr{X})$ is a $C^*$-algebra.
By $\mathbb{K}(\mathscr{X},\mathscr{Y})$ we denote the closed linear subspace of $\mathbb{B}(\mathscr{X},\mathscr{Y})$ spanned by $\{\theta_{y,x} : x\in \mathscr{X}, y\in \mathscr{Y}\}$,
where $\theta_{y,x}$ is defined by $\theta_{y,x}(z)=y\langle x, z\rangle$. Elements of $\mathbb{K}(\mathscr{X},\mathscr{Y})$ are often referred to as ``compact'' operators. We write $\mathbb{K}(\mathscr{X})$ for $\mathbb{K}(\mathscr{X},\mathscr{X})$.\\
Any Hilbert $\mathscr A$-module can be embedded into a certain $C^*$-algebra. To see this, let $\mathscr{X}\oplus \mathscr A$ be the direct sum of the Hilbert $\mathscr A$-modules $\mathscr{X}$ and $\mathscr A$ equipped with the $\mathscr A$-inner product $\langle (x, a), (y, b)\rangle=\langle x, y\rangle+a^* b$, for every $x, y\in \mathscr{X}, a, b\in\mathscr A$. Each $x\in \mathscr{X}$ induces the maps $r_x\in\mathbb{B}(\mathscr A,\mathscr{X})$ and $l_x\in\mathbb{B}(\mathscr{X},\mathscr A)$ given by $r_x(a) = xa$ and $l_x(y)=\langle x, y\rangle$, respectively, such that $r^*_x=l_x$. The map $x\mapsto r_x$ is an isometric linear isomorphism of $\mathscr{X}$ to $\mathbb{K}(\mathscr A,\mathscr{X})$ and $x\mapsto l_x$ is an isometric conjugate linear isomorphism of $\mathscr{X}$ to $\mathbb{K}(\mathscr{X},\mathscr A)$. Further, every $a\in \mathscr A$ induces the map $T_a\in\mathbb{K}(\mathscr A)$ given by $T_a(b)=ab$. The
map $a\mapsto T_a$ defines an isomorphism of $C^*$-algebras $\mathscr A$ and $\mathbb{K}(\mathscr A)$. Set
$$\mathbb{L}(\mathscr{X})=\begin{bmatrix}
\mathbb{K}(\mathscr A) & \mathbb{K}(\mathscr{X},\mathscr A) \\
\mathbb{K}(\mathscr A,\mathscr{X}) & \mathbb{K}(\mathscr{X})
\end{bmatrix}=\left\{\begin{bmatrix}
T_a & l_y \\
r_x & T
\end{bmatrix} : a\in\mathscr A,\, x, y\in \mathscr{X},\, T\in\mathbb{K}(\mathscr{X})\right\}$$
Then $\mathbb{L}(\mathscr{X})$ is a $C^*$-subalgebra of $\mathbb{K}(\mathscr{X}\oplus \mathscr A)$, called the \emph{linking algebra} of $\mathscr{\mathscr{X}}$. Clearly
$$\mathscr{X}\simeq\begin{bmatrix}
0 & 0 \\
\mathscr{X} & 0
\end{bmatrix},\quad \mathscr A\simeq\begin{bmatrix}
\mathscr A & 0 \\
0 & 0
\end{bmatrix},\quad \mathbb{K}(\mathscr{X})\simeq\begin{bmatrix}
0 & 0 \\
0 & \mathbb{K}(\mathscr{X})
\end{bmatrix}.$$
Furthermore, $\langle x, y\rangle$ of $\mathscr{X}$ becomes the product $l_xr_y$ in $\mathbb{L}(\mathscr{X})$ and the module multiplication of $\mathscr{\mathscr{X}}$ becomes a part of the internal multiplication of $\mathbb{L}(\mathscr{X})$. We refer the reader to \cite{lan, RW} for more information on Hilbert $C^*$-modules and linking algebras.

Following Seddik \cite{se.1} we introduce a notion of parallelism in normed spaces in Section 2. Inspired by the approximate Birkhoff--James orthogonality ($\varepsilon$-orthogonality) introduced by Dragomir \cite{DRA} and a variant of $\varepsilon$-orthogonality given by Chmieli\'{n}ski \cite{J.C} which has been investigated by Ili\v{s}evi\'{c} and Turn\v{s}ek \cite{M.T} in the setting of Hilbert $C^*$-modules, we introduce a notion of
approximate parallelism ($\varepsilon$-parallelism).

In the next sections, we substantially restrict ourselves to the setting of Hilbert space operators equipped with the operator norm. In section 3, we present several characterizations of the exact and approximate operator parallelism in the algebra $\mathbb{B}(\mathscr{H})$ of bounded linear operators acting on a Hilbert space $\mathscr{H}$. Among other things, we investigate the relationship between approximate parallelism and norm of inner derivations on $\mathbb{B}(\mathscr{H})$. In Section 4, we characterize the parallel elements of a $C^*$-algebra by using states and utilize the linking algebra to give some equivalence assertions regarding parallel elements in a Hilbert $C^*$-module.

\section{Parallelism in normed spaces}

We start our work with the following definition of parallelism in normed spaces.
\begin{definition}
Let $\mathscr{V}$ be a normed space. The vector $x\in \mathscr{V}$ is \emph{exact parallel} or simply \emph{parallel} to $y\in \mathscr{V}$, denoted by $x\parallel y$ (see \cite{se.1}), if
\begin{eqnarray}\label{p}
\|x+\lambda y\|=\|x\|+\|y\|, \mbox{~for~some~ $\lambda\in\mathbb{T}=\{\alpha\in\mathbb{C}: \,\,|\alpha|=1\}$}.
\end{eqnarray}
\end{definition}
Notice that the parallelism is a symmetric relation. It is easy to see that if $x, y$ are linearly dependent, then $x\parallel y$. The converse is however not true, in general.
\begin{example} Let us consider the space $(\mathbb{R}^2, |||.|||)$ where $|||(x_1 , x_2)|||=\max\{|x_1|, |x_2|\}$ for all $(x_1, x_2)\in\mathbb{R}^2$. Let $x=(1, 0), y=(1, 1)$ and $\lambda=1$. Then $x, y$ are linearly independent and $|||x+\lambda y|||=|||(2, 1)|||=2=|||x|||+|||y|||$, i.e., $x\parallel y$.
\end{example}

An operator $T$ on a separable complex Hilbert space is said to be in the Schatten $p$-class $\mathcal{C}_p\,\,(1\leq p <\infty)$, if ${\rm tr}(|T|^p)< \infty$. The Schatten $p$-norm of $T$ is defined by $\|T\|_p = ({\rm tr}(|T|^p))^{\frac{1}{p}}$. For $1< p \leq 2$ and $\frac{1}{p} + \frac{1}{q} = 1$, the Clarkson inequality for $T, S\in \mathcal{C}_p$ asserts that
$$\|T + S\|^q_p + \|T - S\|^q_p \leq 2 (\|T\|^p_p + \|S\|^p_p)^{\frac{q}{p}},$$
which can be found in \cite{McCarthy}.
\begin{theorem}
Let $T, S\in \mathcal{C}_p$ with $1< p \leq 2$ and $\frac{1}{p} + \frac{1}{q} = 1$. The following statements are equivalent:\\
(i) $T, S$ are linearly dependent;\\
(ii) $T\parallel S$.
\end{theorem}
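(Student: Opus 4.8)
The plan is to prove the nontrivial implication (ii) $\Rightarrow$ (i), since (i) $\Rightarrow$ (ii) is immediate: if $S = cT$ for some scalar $c$, writing $c = |c|e^{i\theta}$ and taking $\lambda = e^{-i\theta}$ (or $\lambda = 1$ if $S=0$) gives $\|T + \lambda S\| = (1+|c|)\|T\|_p = \|T\|_p + \|S\|_p$. For the converse, suppose $T \parallel S$, so there is $\lambda \in \mathbb{T}$ with $\|T + \lambda S\|_p = \|T\|_p + \|S\|_p$. Without loss of generality (replacing $S$ by $\lambda S$, which does not affect linear dependence) we may assume $\lambda = 1$, i.e. $\|T + S\|_p = \|T\|_p + \|S\|_p$. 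We may also assume $T, S$ are both nonzero, otherwise linear dependence is trivial.

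The key tool is the Clarkson inequality $\|T+S\|_p^q + \|T-S\|_p^q \le 2\big(\|T\|_p^p + \|S\|_p^p\big)^{q/p}$, valid for $1 < p \le 2$ with $q$ the conjugate exponent (so $q \ge 2$). First I would handle the easy homogeneity reduction: since the parallelism relation, the Clarkson inequality, and linear dependence are all invariant under scaling $T$ and $S$ by positive reals, I would normalize so that $\|T\|_p = \|S\|_p = 1$; then the parallelism hypothesis reads $\|T+S\|_p = 2$. Plugging this into Clarkson gives $2^q + \|T - S\|_p^q \le 2 \cdot 2^{q/p} = 2^{1 + q/p} = 2^q$ (using $1 + q/p = q$ since $\frac1p + \frac1q = 1$ forces $\frac{q}{p} = q - 1$). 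Hence $\|T - S\|_p^q \le 0$, so $\|T - S\|_p = 0$, i.e. $T = S$. Unwinding the normalization, the original $T$ and $S$ differ by a positive scalar, hence (after also unwinding the replacement of $S$ by $\lambda S$) the original pair is linearly dependent.

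The main subtlety — really the only place care is needed — is the arithmetic identity $1 + \frac{q}{p} = q$ and the direction of the inequality: one must check that Clarkson indeed forces the $\|T-S\|_p^q$ term to vanish rather than merely bounding it, and this hinges on the equality $\|T+S\|_p = \|T\|_p + \|S\|_p$ saturating the first term of the left-hand side against the full right-hand side. I would also note explicitly why the reduction to $\lambda = 1$ is legitimate: $T \parallel S$ with witness $\lambda$ means $T \parallel (\lambda S)$ with witness $1$, and $\{T, S\}$ is linearly dependent iff $\{T, \lambda S\}$ is. Everything else is a routine substitution, so I do not anticipate a genuine obstacle; the proof is short once the Clarkson inequality is in hand.
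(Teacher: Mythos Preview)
Your approach is essentially the paper's: both arguments reduce to the case where the two operators have equal norm and then apply Clarkson to force the difference to vanish, with the arithmetic $1 + q/p = q$ making the right-hand side collapse to exactly $2^q$. The one place you should tighten is the ``easy homogeneity reduction'': the claim that $\|T+S\|_p = \|T\|_p + \|S\|_p$ implies $\big\|\,T/\|T\|_p + S/\|S\|_p\,\big\|_p = 2$ is true but not a tautology --- it needs a short triangle-inequality argument (assuming $\|T\|_p \le \|S\|_p$, write $\tfrac{T}{\|T\|_p} + \tfrac{S}{\|S\|_p} = \tfrac{1}{\|T\|_p}(T+S) - \big(\tfrac{1}{\|T\|_p} - \tfrac{1}{\|S\|_p}\big)S$ and bound from below). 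This is precisely the content of the paper's first displayed chain of inequalities, which rescales $S$ to $\tfrac{\lambda\|T\|_p}{\|S\|_p}S$ rather than normalizing both to unit norm; once you supply that line, the two proofs are identical.
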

\begin{proof}
Obviously, (i)$\Longrightarrow$(ii).\\
Suppose (ii) holds. Therefore $\|T + \lambda S\|_p = \|T\|_p + \|S\|_p$ for some $\lambda\in\mathbb{T}$. Without loss of generality we may assume that $\|T\|_p \leq \|S\|_p$. We have
\begin{align*}
2\|T\|_p &= \|T\|_p + \left\|\frac{\lambda \|T\|_p}{\|S\|_p}S\right\|_p \geq\left\|T + \frac{\lambda \|T\|_p}{\|S\|_p}S\right\|_p
 = \left\|T + \lambda S - \lambda\left(1 - \frac{\|T\|_p}{\|S\|_p}\right)S\right\|_p
\\&\geq \|T + \lambda S\|_p - \left(1 - \frac{\|T\|_p}{\|S\|_p}\right)\|S\|_p
= \|T\|_p + \|S\|_p - \left(1 - \frac{\|T\|_p}{\|S\|_p}\right)\|S\|_p
= 2\|T\|_p,
\end{align*}
so $\left\|T + \frac{\lambda \|T\|_p}{\|S\|_p}S\right\|_p = 2\|T\|_p$. Hence by Clarkson inequality we get
\begin{align*}
2^q\|T\|^q_p + \left\|T - \frac{\lambda \|T\|_p}{\|S\|_p}B\right\|^q_p & = \left\|T + \frac{\lambda \|T\|_p}{\|S\|_p}S\right\|^q_p + \left\|T - \frac{\lambda \|T\|_p}{\|S\|_p}S\right\|^q_p
\\&\leq 2\left(\|T\|^p_p + \left\|\frac{\lambda \|T\|_p}{\|S\|_p}S\right\|^p_p\right)^{\frac{q}{p}}
= 2^{1 + \frac{q}{p}}\|T\|^q_p = 2^q\|T\|^q_p,
\end{align*}
wherefrom we get $\left\|T - \frac{\lambda \|T\|_p}{\|S\|_p}S\right\|^q_p = 0$. Hence $T = \frac{\lambda \|T\|_p}{\|S\|_p}S$, which gives (i).
\end{proof}

The following important example is the motivation for further discussion.
\begin{example}
If $\tau_1, \tau_2$ are positive linear functionals on a $C^*$-algebra ${\mathscr A}$. Then for $\lambda=1\in\mathbb{T}$, by \cite[Corollary 3.3.5]{mor} we have $\|\tau_1+\lambda\tau_2\|=\|\tau_1\|+\|\tau_2\|$. So $\tau_1\parallel \tau_2$.
\end{example}
\begin{example}
Suppose that $\tau$ is a self-adjoint bounded linear functional on a $C^*$-algebra. By Jordan Decomposition Theorem \cite[Theorem 3.3.10]{mor}, there exist positive linear functionals $\tau_+, \tau_-$ such that $\tau = \tau_+-\tau_-$ and $\|\tau\|=\|\tau_+\|+\|\tau_-\|$. Thus for $\lambda=-1\in\mathbb{T}$ we have $\|\tau_++\lambda\tau_-\|=\|\tau_+\|+\|\tau_-\|$. Hence $\tau_+\parallel \tau_-$.
\end{example}

For every $\varepsilon\in[0 , 1)$, the following notion of the approximate Birkhoff--James orthogonality ($\varepsilon$-orthogonality) was
introduced by Dragomir \cite{DRA} as
$$x\perp^\varepsilon y \Longleftrightarrow \|x+\lambda y\|\geq(1-\varepsilon)\|x\|\,\,\,\,(\lambda\in\mathbb{C}).$$
In addition, an alternative definition of $\varepsilon$-orthogonality was given by Chmieli\'{n}ski \cite{J.C}. These facts motivate us to give the following definition of approximate parallelism ($\varepsilon$-parallelism) in the setting of normed spaces.

\begin{definition}
Two elements $x$ and $y$ in a a normed space are \emph{approximate parallel ($\varepsilon$-parallel)}, denoted by $x\parallel^{\varepsilon} y$, if
\begin{eqnarray}\label{ap}
\inf\{\|x+\mu y\| :\, \mu\in\mathbb{C}\}\leq\varepsilon\|x\|.
\end{eqnarray}
\end{definition}
It is remarkable that the relation $\varepsilon$-parallelism for $\varepsilon=0$ is the same as the exact parallelism.
\begin{proposition}
In a normed space, the $0$-parallelism is the same as exact parallelism.
\end{proposition}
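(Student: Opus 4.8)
The plan is to substitute $\varepsilon=0$ into the defining inequality \eqref{ap} and match the outcome against \eqref{p}. For $\varepsilon=0$ the relation $x\parallel^{0}y$ reads $\inf\{\|x+\mu y\|:\mu\in\mathbb{C}\}\le 0$, and since a norm is non-negative this says exactly that $\inf\{\|x+\mu y\|:\mu\in\mathbb{C}\}=0$; so the whole task is to show that this infimum vanishes precisely when $\|x+\lambda y\|=\|x\|+\|y\|$ for some $\lambda\in\mathbb{T}$. I would first dispose of the degenerate cases: if $y=0$ the infimum equals $\|x\|$, which forces $x=0$, and then \eqref{p} holds for every $\lambda\in\mathbb{T}$; and if $x=0$ both relations hold with an arbitrary $\lambda$. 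So from now on assume $x,y\neq 0$.

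For the implication $x\parallel^{0}y\Longrightarrow x\parallel y$: the condition $\inf\{\|x+\mu y\|:\mu\in\mathbb{C}\}=0$ says that $x$ lies in the closure of the subspace $\mathbb{C}y$; since $\dim(\mathbb{C}y)=1$ this subspace is closed, hence $x\in\mathbb{C}y$, i.e.\ $x=\mu_{0}y$ for some $\mu_{0}\in\mathbb{C}\setminus\{0\}$. Thus $x$ and $y$ are linearly dependent, and by the remark following Definition~2.1 this already yields $x\parallel y$. (Explicitly, taking $\lambda=\mu_{0}/|\mu_{0}|\in\mathbb{T}$ one computes $\|x+\lambda y\|=|\mu_{0}+\lambda|\,\|y\|=(|\mu_{0}|+1)\|y\|=\|x\|+\|y\|$.)

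For the converse $x\parallel y\Longrightarrow x\parallel^{0}y$, which I expect to be the main obstacle: since linear dependence already entails \eqref{p} by the remark after Definition~2.1, the content of this direction is the statement that \eqref{p} \emph{conversely} forces $x$ and $y$ to be linearly dependent (equivalently, $\inf\{\|x+\mu y\|:\mu\in\mathbb{C}\}=0$). Starting from $\|x+\lambda y\|=\|x\|+\|y\|$ I would use Hahn--Banach to pick a norm-one functional $f$ with $f(x+\lambda y)=\|x\|+\|y\|$, infer $f(x)=\|x\|$ and $f(\lambda y)=\|y\|$, and then attempt to promote this equality of functional values to proportionality of $x$ and $y$. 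This last promotion is the delicate step and is where I would concentrate the effort, since equality in the triangle inequality is a genuinely weaker condition than linear dependence in an arbitrary normed space (compare the Example following Definition~2.1); so the argument at this point must be carried out with care, bringing in the extra information that distinguishes \eqref{p} from a mere norm identity.
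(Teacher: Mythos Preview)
Your treatment of the forward direction $x\parallel^{0}y\Longrightarrow x\parallel y$ is correct. The paper argues the same implication by picking a sequence $\{\mu_n\}$ with $\|x+\mu_n y\|\to 0$, showing it is bounded, extracting a convergent subsequence $\mu_{k_n}\to\mu_0$, and concluding $x=-\mu_0 y$; your observation that $\inf_\mu\|x+\mu y\|=0$ means $x$ lies in the closure of the one-dimensional (hence closed) subspace $\mathbb{C}y$ reaches the same conclusion more directly. So on this half the two proofs agree in substance.

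Your caution about the converse is entirely justified --- in fact the converse is \emph{false}, and the very example you cite is a counterexample. In Example~2.2 one has $x=(1,0)$, $y=(1,1)$ in $(\mathbb{R}^2,|||\cdot|||)$ with $x\parallel y$, yet
\[
\inf_{\mu}\,|||x+\mu y|||=\inf_{\mu}\max\{|1+\mu|,|\mu|\}=\tfrac12\neq 0,
\]
so $x\not\parallel^{0}y$. There is no ``extra information that distinguishes \eqref{p} from a mere norm identity'': the relation $x\parallel y$ \emph{is} exactly equality in the triangle inequality for some unimodular $\lambda$, and Example~2.2 already exhibits that equality with $\lambda=1$. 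Your proposed Hahn--Banach argument cannot be completed, because the conclusion you are aiming for does not hold.

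The paper's own proof only establishes the implication $x\parallel^{0}y\Longrightarrow x\parallel y$ and is silent on the converse; the proposition as phrased (an equivalence in an arbitrary normed space) is therefore not correct. You have not missed an idea --- you have detected a gap in the statement.
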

\begin{proof} Let us assume that $x\neq0$ and choose sequence $\{\mu_n\}$ of vectors in $\mathbb{C}$ such that $\lim_{n\rightarrow\infty} \|x+\mu_n y\|=0$.
It follows from $|\mu_n|\,\|y\|\leq\|x+\mu_n y\|+\|x\|$ that the sequence $\{\mu_n\}$ is bounded. Therefore there exists a subsequence $\{\mu_{k_n}\}$ which is convergent to a number $\mu_0$. Since $x\neq 0$ and $\lim_{n\rightarrow\infty} \|x+\mu_n y\|=0$, we conclude that $\mu_0\neq 0$ as well as $\|x+\mu_0 y\|=0$, or equivalently, $x=-\mu_0 y$. Put $\lambda=-\frac{|\mu_0|}{\overline{\mu_0}}\in\mathbb{T}$. Then
$$\|x+\lambda y\|=\left\|-\mu_0 y-\frac{|\mu_0|}{\overline{\mu_0}}y\right\|=(|\mu_0|+1)\|y\|=\|-\mu_0y\|+\|y\|=\|x\|+\|y\|,$$
whence $\|x+\lambda y\|=\|x\|+\|y\|$ for some $\lambda\in\mathbb{T}$, i.e., $x\parallel y$.\\
\end{proof}

From now on we deal merely with the space $\mathbb{B}(\mathscr{H})$ endowed with the operator norm.

\section{Operator parallelism}
In the present section, we discuss the exact and approximate operator parallelism. These notions can be defined by the same formulas as \eqref{p} and \eqref{ap} in normed spaces. Thus $$T_1\parallel T_2\Leftrightarrow \|T_1+\lambda T_2\|=\|T_1\|+\|T_2\|$$ for some $\lambda\in\mathbb{T}$. The following example shows that the concept of operator parallelism is important.
\begin{example}
Suppose that $T$ is a compact self-adjoint operator on a Hilbert space $\mathscr{H}$. Then either $\|T\|$ or $-\|T\|$ is an eigenvalue of $T$. We may assume that $\|T\|=1$ is an eigenvalue of $T$. Therefore there exists a nonzero vector $x\in\mathscr{H}$ such that $Tx=x$. Hence
$2\|x\|=\|(T+I)x\|\leq\|T+I\|\|x\|\leq2\|x\|$. So we get $\|T+I\|=2=\|T\|+\|I\|$. Thus $T\parallel I$ and $T$ fulfils the Daugavet equation $\|T+I\|=\|T\|+1$; see \cite{WER}. This shows that the Daugavet equation is closely related to the notion of parallelism.
\end{example}
In the following proposition we state some basic properties of operator parallelism.
\begin{proposition}\label{pr.11}
Let $T_1, T_2\in \mathbb{B}(\mathscr{H})$. The following statements are equivalent:\\
(i) $T_1\parallel T_2$;\\
(ii) $T_1^*\parallel T_2^*$;\\
(iii) $\alpha T_1\parallel \beta T_2$ \ \ \ \ \ $(\alpha, \beta\in\mathbb{R}\smallsetminus\{0\})$;\\
(iv) $\gamma T_1\parallel \gamma T_2$ \ \ \ \ \ $(\gamma\in\mathbb{C}\smallsetminus\{0\})$.

\end{proposition}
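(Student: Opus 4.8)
The plan is to derive all four equivalences from the single defining identity $\|T_1+\lambda T_2\|=\|T_1\|+\|T_2\|$ by elementary manipulations, using only that the operator norm is $*$-invariant ($\|A^*\|=\|A\|$) and absolutely homogeneous ($\|\gamma A\|=|\gamma|\,\|A\|$). I would organize the argument as the three separate equivalences (i)$\Leftrightarrow$(ii), (i)$\Leftrightarrow$(iv) and (i)$\Leftrightarrow$(iii). In each case the converse implication is obtained by feeding the forward implication the modified pair (the adjoints, or the scaled operators) together with the inverse modification, so only the forward direction ever needs a genuine argument.

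For (i)$\Rightarrow$(ii): if $\|T_1+\lambda T_2\|=\|T_1\|+\|T_2\|$ with $\lambda\in\mathbb{T}$, taking adjoints and using $\|A^*\|=\|A\|$ gives $\|T_1^*+\overline{\lambda}\,T_2^*\|=\|T_1^*\|+\|T_2^*\|$, and since $\overline{\lambda}\in\mathbb{T}$ this is exactly $T_1^*\parallel T_2^*$. For (i)$\Rightarrow$(iv): for $\gamma\in\mathbb{C}\setminus\{0\}$ one has $\gamma T_1+\lambda(\gamma T_2)=\gamma(T_1+\lambda T_2)$, hence $\|\gamma T_1+\lambda(\gamma T_2)\|=|\gamma|\big(\|T_1\|+\|T_2\|\big)=\|\gamma T_1\|+\|\gamma T_2\|$ with the \emph{same} witness $\lambda$. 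Both converses are then immediate (replace $T_i$ by $T_i^*$, resp.\ $\gamma$ by $\gamma^{-1}$).

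The substantive case is (i)$\Leftrightarrow$(iii), where the two factors receive unrelated real scalings so a common witness can no longer simply be carried along. Here I would first isolate a scaling lemma: \emph{if $T_1\parallel T_2$ with witness $\lambda$, then $\|sT_1+t\lambda T_2\|=s\|T_1\|+t\|T_2\|$ for all $s,t\ge 0$.} The inequality ``$\le$'' is the triangle inequality; for ``$\ge$'' one splits off one summand: if $s\ge t$ write $sT_1+t\lambda T_2=s(T_1+\lambda T_2)-(s-t)\lambda T_2$ and apply $\|A-B\|\ge\|A\|-\|B\|$ together with $\|T_1+\lambda T_2\|=\|T_1\|+\|T_2\|$; if $t\ge s$ write instead $sT_1+t\lambda T_2=t(T_1+\lambda T_2)-(t-s)T_1$. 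Granting the lemma, for $\alpha,\beta\in\mathbb{R}\setminus\{0\}$ I would take $\mu=\mathrm{sgn}(\alpha\beta)\,\lambda\in\mathbb{T}$; a short sign computation gives $\alpha T_1+\mu\beta T_2=\mathrm{sgn}(\alpha)\big(|\alpha|T_1+|\beta|\lambda T_2\big)$, so by the lemma $\|\alpha T_1+\mu\beta T_2\|=|\alpha|\,\|T_1\|+|\beta|\,\|T_2\|=\|\alpha T_1\|+\|\beta T_2\|$, i.e.\ $\alpha T_1\parallel\beta T_2$. The converse follows by applying this to the pair $(\alpha T_1,\beta T_2)$ with the scalars $1/\alpha,1/\beta$.

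The only point that is not pure bookkeeping is the lower bound in the scaling lemma: one has to choose which summand to separate (a multiple of $\lambda T_2$ when $s\ge t$, a multiple of $T_1$ when $t\ge s$) so that the reverse triangle inequality lands precisely on $s\|T_1\|+t\|T_2\|$ rather than on a strictly weaker estimate such as $(2s-t)\|T_1\|+t\|T_2\|$. Once that case split is arranged, all three equivalences close up using nothing beyond the invariance properties of the operator norm and the fact that $\mathbb{T}$ is closed under complex conjugation and under multiplication by $\pm 1$.
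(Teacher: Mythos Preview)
Your proof is correct and follows essentially the same approach as the paper: the equivalences (i)$\Leftrightarrow$(ii)$\Leftrightarrow$(iv) are handled by immediate norm-invariance, and (i)$\Rightarrow$(iii) is obtained by writing $\alpha T_1+\lambda\beta T_2=\alpha(T_1+\lambda T_2)-(\alpha-\beta)\lambda T_2$ and applying the reverse triangle inequality. The only difference is cosmetic: the paper simply asserts ``we can assume that $\alpha\geq\beta>0$'' and treats the single case, whereas you spell this reduction out via your scaling lemma (handling both $s\ge t$ and $t\ge s$) together with the explicit sign computation $\mu=\mathrm{sgn}(\alpha\beta)\lambda$.
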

\begin{proof}
The equivalences (i)$\Leftrightarrow$(ii)$\Leftrightarrow$(iv) immediately follow from the definition of operator parallelism.\\
(i)$\Longrightarrow$(iii) Suppose that $\alpha, \beta\in\mathbb{R}\smallsetminus\{0\}$ and $T_1\parallel T_2$. Hence $\|T_1+\lambda T_2\|=\|T_1\|+\|T_2\|$ for some $\lambda\in\mathbb{T}$. We can assume that $\alpha \geq \beta>0$. We therefore have
\begin{align*}
\|\alpha T_1\|+\|\beta T_2\|&\geq\|\alpha T_1+\lambda(\beta T_2)\|
=\|\alpha(T_1+\lambda T_2)-(\alpha-\beta)(\lambda T_2)\|
\\&\geq \|\alpha(T_1+\lambda T_2)\|-\|(\alpha-\beta)\lambda T_2\|=\alpha\|T_1+\lambda T_2\|-(\alpha-\beta)\|T_2\|
\\&=\alpha(\|T_1\|+\|T_2\|)-(\alpha-\beta)\|T_2\|=\|\alpha T_1\|+\|\beta T_2\|,
\end{align*}
whence $\|\alpha T_1+\lambda(\beta T_2)\|=\|\alpha T_1\|+\|\beta T_2\|$ for some $\lambda\in\mathbb{T}$. So $\alpha T_1\parallel \beta T_2$.\\
(iii)$\Longrightarrow$(i) is obvious.
\end{proof}
In what follows $\sigma(T)$ and $r(T)$ stand for the spectrum and spectral radius of an arbitrary element $T\in \mathbb{B}(\mathscr{H})$, respectively. In the following theorem we shall characterize the operator parallelism.
\begin{theorem}\label{th.13}
Let $T_1, T_2\in \mathbb{B}(\mathscr{H})$. Then the following statements are equivalent:\\
(i) $T_1\parallel T_2$;\\
(ii) There exist a sequence of unit vectors $\{\xi_n\}$ in $\mathscr{H}$ and $\lambda\in\mathbb{T}$ such that $$\lim_{n\rightarrow\infty} (T_1\xi_n\mid T_2\xi_n)=\lambda\|T_1\|\,\|T_2\|;$$
(iii) $r(T_2^* T_1)=\|T_2^* T_1\|=\|T_1\|\,\|T_2\|$;\\
(iv) $T_1^* T_1\parallel T_1^* T_2$ and $\|T_1^* T_2\|=\|T_1\|\,\|T_2\|$;\\
(v) $\|T_1^*(T_1+\lambda T_2)\|=\|T_1\|(\|T_1\|+\|T_2\|)$ for some $\lambda\in\mathbb{T}$.
\end{theorem}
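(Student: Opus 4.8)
The plan is to prove the cyclic chain of implications (i)$\Rightarrow$(iii)$\Rightarrow$(iv)$\Rightarrow$(v)$\Rightarrow$(i) together with (i)$\Leftrightarrow$(ii), using the supremum characterization of the operator norm via unit vectors as the common engine. First I would note that $\|T_2^*T_1\|\le\|T_1\|\,\|T_2\|$ always holds, and that for any $A\in\mathbb B(\mathscr H)$ one has $r(A)\le\|A\|$, so the content of (iii) is the two reverse estimates. For (i)$\Rightarrow$(ii): if $\|T_1+\lambda T_2\|=\|T_1\|+\|T_2\|$, pick unit vectors $\xi_n$ with $\|(T_1+\lambda T_2)\xi_n\|\to\|T_1\|+\|T_2\|$; since $\|(T_1+\lambda T_2)\xi_n\|\le\|T_1\xi_n\|+\|T_2\xi_n\|\le\|T_1\|+\|T_2\|$, a squeeze forces $\|T_1\xi_n\|\to\|T_1\|$ and $\|T_2\xi_n\|\to\|T_2\|$; then expanding $\|(T_1+\lambda T_2)\xi_n\|^2=\|T_1\xi_n\|^2+\|T_2\xi_n\|^2+2\mathrm{Re}\big(\bar\lambda(T_2\xi_n\mid T_1\xi_n)\big)$ and comparing with $(\|T_1\xi_n\|+\|T_2\xi_n\|)^2$ shows $\mathrm{Re}\big(\bar\lambda(T_2\xi_n\mid T_1\xi_n)\big)\to\|T_1\|\,\|T_2\|$; since $|(T_2\xi_n\mid T_1\xi_n)|\le\|T_1\|\,\|T_2\|$, the real part can only approach the bound if the whole quantity approaches $\lambda\|T_1\|\,\|T_2\|$, giving (ii) (after passing to $(T_1\xi_n\mid T_2\xi_n)=\overline{(T_2\xi_n\mid T_1\xi_n)}$ and renaming $\lambda\mapsto\bar\lambda$, or simply restating with the appropriate unimodular constant).

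For (ii)$\Rightarrow$(iii): from $(T_1\xi_n\mid T_2\xi_n)\to\lambda\|T_1\|\,\|T_2\|$ we get $|(T_2^*T_1\xi_n\mid\xi_n)|\to\|T_1\|\,\|T_2\|$, so $\|T_2^*T_1\|\ge\|T_1\|\,\|T_2\|$, hence equality; for the spectral radius, I would use that $\|(T_2^*T_1)^k\|\ge|((T_2^*T_1)^k\xi_n\mid\xi_n)|$ is not immediate, so instead argue via the numerical range: $\lambda\|T_1\|\,\|T_2\|$ is a limit of values $(T_2^*T_1\xi_n\mid\xi_n)$, so it lies in the closure of the numerical range, which is contained in the closed disk of radius $r(T_2^*T_1)$ only if $T_2^*T_1$ is normal — this is the delicate point. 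A cleaner route: set $A=T_2^*T_1$; one shows $\lambda\|T_1\|\,\|T_2\|\in\overline{W(A)}$, and since $\|A\|=\|T_1\|\,\|T_2\|$ is attained in modulus by a point of the approximate numerical range, a standard fact (the approximate point spectrum meets the circle $|z|=\|A\|$ whenever a unit-vector sequence $\xi_n$ has $(A\xi_n\mid\xi_n)\to\mu$ with $|\mu|=\|A\|$, because then $\|A\xi_n-\mu\xi_n\|^2=\|A\xi_n\|^2-2\mathrm{Re}\,\bar\mu(A\xi_n\mid\xi_n)+|\mu|^2\le\|A\|^2-|\mu|^2+o(1)=o(1)$) yields $\mu\in\sigma(A)$, so $r(A)\ge\|A\|$, and hence (iii).

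For (iii)$\Rightarrow$(iv): since $r(T_2^*T_1)=\|T_2^*T_1\|$, there is an approximate eigenvector $\xi_n$ with $T_2^*T_1\xi_n-\mu\xi_n\to0$ where $|\mu|=\|T_1\|\,\|T_2\|$; writing $\mu=\lambda\|T_1\|\,\|T_2\|$, compute $\|T_1^*(T_1+\bar\lambda T_2)\xi_n\|$ — wait, I would instead directly verify $\|T_1^*T_1+\lambda' T_1^*T_2\|$ achieves $\|T_1\|^2\cdot\text{(stuff)}$; concretely, $T_1^*T_2\xi_n=(T_2^*T_1)^*\xi_n$, and using $(T_2^*T_1)\xi_n\approx\mu\xi_n$ one gets $\|T_1\xi_n\|\to\|T_1\|$ (from $\|T_1\|\,\|T_2\|=|\mu|=\lim|(T_2^*T_1\xi_n\mid\xi_n)|\le\lim\|T_1\xi_n\|\,\|T_2\|$) and then evaluates $(T_1^*T_1+\bar\mu|\mu|^{-1}\|T_1\|\,\|T_2\|^{-1}\cdots)$ — the bookkeeping here is routine once the approximate eigenvector is in hand, and it simultaneously gives $\|T_1^*T_2\|=\|T_1\|\,\|T_2\|$ (from (iii)) and the parallelism $T_1^*T_1\parallel T_1^*T_2$ via the same unit-vector sequence plugged into the definition \eqref{p}. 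The implication (iv)$\Rightarrow$(v) is then a one-line unpacking: $T_1^*T_1\parallel T_1^*T_2$ means $\|T_1^*T_1+\lambda T_1^*T_2\|=\|T_1^*T_1\|+\|T_1^*T_2\|=\|T_1\|^2+\|T_1\|\,\|T_2\|=\|T_1\|(\|T_1\|+\|T_2\|)$ for some $\lambda\in\mathbb T$, and $T_1^*T_1+\lambda T_1^*T_2=T_1^*(T_1+\lambda T_2)$. Finally (v)$\Rightarrow$(i): $\|T_1\|(\|T_1\|+\|T_2\|)=\|T_1^*(T_1+\lambda T_2)\|\le\|T_1\|\,\|T_1+\lambda T_2\|\le\|T_1\|(\|T_1\|+\|T_2\|)$, so (assuming $T_1\ne0$, the case $T_1=0$ being trivial) we may cancel $\|T_1\|$ to get $\|T_1+\lambda T_2\|=\|T_1\|+\|T_2\|$, i.e. $T_1\parallel T_2$.

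The main obstacle is the spectral-radius equality in (iii), specifically passing from "$\|T_2^*T_1\|=\|T_1\|\,\|T_2\|$ is approximately attained by a numerical-range sequence" to "that extreme value lies in the spectrum." The key technical lemma is the elementary estimate $\|A\xi_n-\mu\xi_n\|^2\le\|A\|^2-|\mu|^2+o(1)$ valid whenever $(A\xi_n\mid\xi_n)\to\mu$ with unit $\xi_n$; when $|\mu|=\|A\|$ this forces $\xi_n$ to be an approximate eigenvector for eigenvalue $\mu$, hence $\mu\in\sigma(A)$ and $r(A)=\|A\|$. Everywhere else the argument is a disciplined but routine deployment of Cauchy--Schwarz, the triangle inequality, and squeeze arguments on unit-vector sequences.
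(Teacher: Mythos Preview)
Your proposal is correct and follows essentially the same architecture as the paper's proof: unit-vector sequences, squeeze arguments, and the key observation that if $(A\xi_n\mid\xi_n)\to\mu$ with $|\mu|=\|A\|$ then $\|A\xi_n-\mu\xi_n\|\to0$, placing $\mu$ in the approximate point spectrum and giving $r(A)=\|A\|$. The paper organizes the equivalences slightly differently (it proves (ii)$\Rightarrow$(iv) rather than (iii)$\Rightarrow$(iv)) and at that step invokes a state $\varphi$ on $\mathbb{B}(\mathscr{H})$ with $\varphi\big((T_1+\lambda T_2)^*(T_1+\lambda T_2)\big)=(\|T_1\|+\|T_2\|)^2$ to squeeze out $\varphi(T_1^*T_1)=\|T_1\|^2$ and $\varphi(\lambda T_1^*T_2)=\|T_1\|\,\|T_2\|$, whence $\|T_1^*T_1+\lambda T_1^*T_2\|=\|T_1^*T_1\|+\|T_1^*T_2\|$. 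Your route via the approximate eigenvector of $T_2^*T_1$ and the numerical-range lower bound $\|T_1^*(T_1+\lambda T_2)\|\ge|(T_1^*(T_1+\lambda T_2)\xi_n\mid\xi_n)|\to\|T_1\|^2+\|T_1\|\,\|T_2\|$ achieves the same conclusion without appealing to states, which is marginally more elementary; the paper's state argument is a little slicker at that one step but requires the extra machinery. Your sketch of (iii)$\Rightarrow$(iv) is admittedly hand-wavy (``the bookkeeping here is routine''), but the indicated computation does go through once you fix $\lambda$ with $\mu=\lambda\|T_1\|\,\|T_2\|$ and use $(T_1^*T_2\xi_n\mid\xi_n)=\overline{(T_2^*T_1\xi_n\mid\xi_n)}\to\bar\lambda\|T_1\|\,\|T_2\|$.
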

\begin{proof}
(i)$\Leftrightarrow$(ii) Let $T_1\parallel T_2$. Then $\|T_1+\lambda T_2\|=\|T_1\|+\|T_2\|$ for some $\lambda\in\mathbb{T}$. Since
$$\sup\{\|T_1\xi+\lambda T_2\xi\|:\, \xi\in\mathscr{H},\|\xi\|=1\}=\|T_1+\lambda T_2\|=\|T_1\|+\|T_2\|,$$
there exists a sequence of unit vectors $\{\xi_n\}$ in $\mathscr{H}$ such that $\lim_{n\rightarrow\infty} \|T_1\xi_n+\lambda T_2\xi_n\|=\|T_1\|+\|T_2\|$. We have
\begin{align*}
\|T_1\|^2+2\|T_1\|\,\|T_2\|+\|T_2\|^2&=\lim_{n\rightarrow\infty} \|T_1\xi_n+\lambda T_2\xi_n\|^2
\\&=\lim_{n\rightarrow\infty} \left[\,\|T_1\xi_n\|^2+(T_1\xi_n\mid \lambda T_2\xi_n)+(\lambda T_2\xi_n\mid T_1\xi_n)+\|T_2\xi_n\|^2\,\right]
\\&\leq\|T_1\|^2+2\lim_{n\rightarrow\infty} |(T_1\xi_n\mid \lambda T_2\xi_n)|+\|T_2\|^2
\\&\leq\|T_1\|^2+2\|T_1\|\,\|T_2\|+\|T_2\|^2.
\end{align*}
Therefore $\lim_{n\rightarrow\infty} (T_1\xi_n\mid \lambda T_2\xi_n)|=\|T_1\|\,\|T_2\|$, or equivalently,
$$\lim_{n\rightarrow\infty} (T_1\xi_n\mid T_2\xi_n)=\lambda\|T_1\|\,\|T_2\|.$$
To prove the converse, suppose that there exist a sequence of unit vectors $\{\xi_n\}$ in $\mathscr{H}$ and $\lambda\in\mathbb{T}$ such that $\lim_{n\rightarrow\infty} (T_1\xi_n\mid T_2\xi_n)=\lambda\|T_1\|\,\|T_2\|$. It follows from
$$\|T_1\|\,\|T_2\|=\lim_{n\rightarrow\infty} |(T_1\xi_n\mid T_2\xi_n)|\leq\lim_{n\rightarrow\infty} \|T_1\xi_n\|\,\|T_2\|\leq\|T_1\|\,\|T_2\|,$$
that $\lim_{n\rightarrow\infty} \|T_1\xi_n\|=\|T_1\|$ and by using a similar argument, $\lim_{n\rightarrow\infty} \|T_2\xi_n\|=\|T_2\|$.
So that
$$\lim_{n\rightarrow\infty} \mbox{Re}(T_1\xi_n\mid \lambda T_2\xi_n)=\lim_{n\rightarrow\infty} (T_1\xi_n\mid \lambda T_2\xi_n)=\|T_1\|\,\|T_2\|,$$
whence we reach
\begin{align*}
\|T_1\|+\|T_2\|&\geq\|T_1+\lambda T_2\|
\geq\left(\lim_{n\rightarrow\infty} \|T_1\xi_n+\lambda T_2\xi_n\|^2\right)^\frac{1}{2}
\\&=\left(\lim_{n\rightarrow\infty} \left[\|T_1\xi_n\|^2+2\mbox{Re}(T_1\xi_n\mid \lambda T_2\xi_n)+\| T_2\xi\|^2\right]\right)^\frac{1}{2}
\\&=\left(\|T_1\|^2+2\|T_1\|\,\|T_2\|+\| T_2\|^2\right)^\frac{1}{2}
=\|T_1\|+\|T_2\|.
\end{align*}
Thus $\|T_1+\lambda T_2\|=\|T_1\|+\|T_2\|$, so $T_1\parallel T_2$.\\
(ii)$\Leftrightarrow$(iii) Let $\{\xi_n\}$ be a sequence of unit vectors in $\mathscr{H}$ satisfying
$\lim_{n\rightarrow\infty} (T_1\xi_n\mid T_2\xi_n)=\lambda\|T_1\|\,\|T_2\|$,
for some $\lambda\in\mathbb{T}$. By the equivalence (i)$\Leftrightarrow$(ii) we have $T_1\parallel T_2$. Hence
\begin{align*}
(\|T_1\|+\|T_2\|)^2&=\|T_1+\lambda T_2\|^2=\|(T_1+\lambda T_2)^*(T_1+\lambda T_2)\|
\\&=\|T_1^* T_1+\lambda T_1^* T_2+\overline{\lambda} T_2^* T_1+T_2^* T_2\|
\\&\leq\|T_1^* T_1\|+\|\lambda T_1^* T_2\|+\|\overline{\lambda} T_2^* T_1\|+\|T_2^* T_2\|
\\&=\|T_1\|^2+2\|T_1\|\,\|T_2\|+\|T_2\|^2
\\&=(\|T_1\|+\|T_2\|)^2,
\end{align*}
so $\|T_1^* T_2\|=\|T_1\|\,\|T_2\|.$ \\
Since
\begin{align*}
\|T_1\|\,\|T_2\|=\lim_{n\rightarrow\infty} |(T_1\xi_n\mid T_2\xi_n)|
=\lim_{n\rightarrow\infty} |(T_2^* T_1\xi_n\mid \xi_n)|
\leq\lim_{n\rightarrow\infty} \|T_2^* T_1\xi_n\|
\leq\|T_2^*\|\,\|T_1\|=\|T_1\|\,\|T_2\|,
\end{align*}
we have $\lim_{n\rightarrow\infty} \|T_2^* T_1\xi_n\|=\|T_1\|\,\|T_2\|.$ Next observe that
\begin{multline*}
\|(T_2^* T_1-\lambda\|T_1\|\,\|T_2\|I)\xi_n\|^2=\|T_2^* T_1\xi_n\|^2-\overline{\lambda}\|T_1\|\,\|T_2\|(T_1\xi_n\mid T_2\xi_n)\\
-\lambda\|T_1\|\,\|T_2\|(T_2\xi_n\mid T_1\xi_n)+\|T_1\|^2\,\|T_2\|^2.
\end{multline*}
Therefore $\lim_{n\rightarrow\infty} \|(T_2^* T_1-\lambda\|T_1\|\,\|T_2\|I)\xi_n\|=0.$ Thus $r(T_2^* T_1)=\|T_1\|\,\|T_2\|=\|T_2^* T_1\|.$\\
The proof of the converse follows from the spectral inclusion theorem \cite[Theorem 1.2-1]{gu} that $\sigma(T_2^* T_1)\subseteq\overline{\{(T_2^* T_1\xi\mid \xi) :\,\xi\in\mathscr{H}, \|\xi\|=1\}}$, where the bar denotes the closure.\\
(ii)$\Longrightarrow$(iv) Let $\{\xi_n\}$ be a sequence of unit vectors in $\mathscr{H}$ which satisfies
$$\lim_{n\rightarrow\infty} (T_1\xi_n\mid T_2\xi_n)=\lambda\|T_1\|\,\|T_2\|,$$
for some $\lambda\in\mathbb{T}$. As in the proofs of the implications (ii)$\Longrightarrow$(i) and (ii)$\Longrightarrow$(iii), we get $\|T_1+\lambda T_2\|=\|T_1\|+\|T_2\|$ and $\|T_1^* T_2\|=\|T_1\|\,\|T_2\|$. By \cite[Theorem 3.3.6]{mor} there is a state $\varphi$ over $\mathbb{B}(\mathscr{H})$ such that
$$\varphi((T_1+\lambda T_2)^*(T_1+\lambda T_2))=\|(T_1+\lambda T_2)^*(T_1+\lambda T_2)\|=\|T_1+\lambda T_2\|^2=(\|T_1\|+\|T_2\|)^2.$$
Thus
\begin{align*}
(\|T_1\|+\|T_2\|)^2&=\varphi(T_1^* T_1+\lambda T_1^* T_2+\overline{\lambda}T_2^* T_1+T_2^* T_2)
\\&=\varphi(T_1^* T_1)+\varphi(\lambda T_1^* T_2+\overline{\lambda}T_2^* T_1)+\varphi(T_2^* T_2)
\\&\leq\|T_1^* T_1\|+\|\lambda T_1^* T_2+\overline{\lambda}T_2^* T_1\|+\|T_2^* T_2\|
\\&=\|T_1^* T_1\|+\|T_1^* T_2\|+\|T_2^* T_1\|+\|T_2^* T_2\|
\\&\leq\|T_1\|^2+2\|T_1\|\,\|T_2\|+\|T_2\|^2
\\&=(\|T_1\|+\|T_2\|)^2.
\end{align*}
Therefore $\varphi(T_1^* T_1)=\|T_1^* T_1\|$ and $\varphi(\lambda T_1^* T_2)=\|T_1^* T_2\|$. Hence
$$\|T_1^* T_1\|+\|T_1^* T_2\|=\varphi(T_1^* T_1+\lambda T_1^* T_2)\leq\|T_1^* T_1+\lambda T_1^* T_2\|\leq\|T_1^* T_1\|+\|T_1^* T_2\|.$$
Therefore $\|T_1^* T_1+\lambda T_1^* T_2\|=\|T_1^* T_1\|+\|T_1^* T_2\|$ for some $\lambda\in\mathbb{T}$. Thus $T_1^* T_1\parallel \lambda T_1^* T_2$.\\
(iv)$\Longrightarrow$(v) This implication is trivial.\\
(v)$\Longrightarrow$(i) Let $\|T_1^*(T_1+\lambda T_2)\|=\|T_1\|(\|T_1\|+\|T_2\|)$ for some $\lambda\in\mathbb{T}.$ Then we have
\begin{align*}
\|T_1\|(\|T_1\|+\|T_2\|)\geq\|T_1^*\|\|T_1+\lambda T_2\|
\geq\|T_1^*(T_1+\lambda T_2)\|
=\|T_1\|(\|T_1\|+\|T_2\|).
\end{align*}
Thus $\|T_1+\lambda T_2\|=\|T_1\|+\|T_2\|$, or equivalently, $T_1\parallel T_2$.
\end{proof}
As an immediate consequence of Theorem \ref{th.13}, we get a characterization of operator parallelism.
\begin{corollary}\label{co.14}
Let $T_1, T_2\in \mathbb{B}(\mathscr{H})$. Then the following statements are equivalent:\\
(i) $T_1\parallel T_2$;\\
(ii) ${T_i}^* T_i\parallel {T_j}^* T_i$ and $\|{T_j}^* T_i\|=\|T_j\|\,\|T_i\| \qquad (1\leq i\neq j \leq 2)$;\\
(iii) $T_i{T_i}^*\parallel T_i{T_j}^*$ and $\|T_i{T_j}^*\|=\|T_i\|\,\|T_j\| \qquad(1\leq i\neq j \leq 2)$.
\end{corollary}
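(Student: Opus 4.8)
The plan is to deduce Corollary \ref{co.14} directly from the equivalence (i)$\Leftrightarrow$(iv) of Theorem \ref{th.13} together with the symmetry of parallelism (Proposition \ref{pr.11}) and the $C^*$-identity relating $T$ to $T^*$. First I would observe that clause (ii) of the corollary is, for the case $i=1, j=2$, precisely statement (iv) of Theorem \ref{th.13}: the condition ${T_1}^* T_1\parallel {T_2}^* T_1$ together with $\|{T_2}^* T_1\|=\|T_1\|\,\|T_2\|$. So the implication (i)$\Longrightarrow$(ii) for that index choice is immediate from Theorem \ref{th.13}, and conversely (ii) with $i=1, j=2$ gives back (i) by the same theorem. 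For the other index choice $i=2, j=1$, I would apply Theorem \ref{th.13} after swapping the roles of $T_1$ and $T_2$, which is legitimate because $T_1\parallel T_2$ is symmetric (this is noted right after the definition, and also follows from Proposition \ref{pr.11}); note that $\|{T_1}^* T_2\|=\|{T_2}^* T_1\|$ since they are adjoints of each other, so the norm condition is symmetric in $i,j$ as well.

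Second, for clause (iii) I would pass to adjoints. By Proposition \ref{pr.11}(ii), $T_1\parallel T_2$ if and only if $T_1^*\parallel T_2^*$. Applying the already-established equivalence (i)$\Leftrightarrow$(ii) of the corollary to the pair $T_1^*, T_2^*$ in place of $T_1, T_2$ yields: $T_1^*\parallel T_2^*$ if and only if $(T_i^*)^* T_i^* \parallel (T_j^*)^* T_i^*$ and $\|(T_j^*)^* T_i^*\| = \|T_j^*\|\,\|T_i^*\|$ for $1\le i\ne j\le 2$. Since $(T_i^*)^* T_i^* = T_i T_i^*$, $(T_j^*)^* T_i^* = T_j T_i^*$, and $\|T_k^*\|=\|T_k\|$, this is exactly clause (iii) of the corollary. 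Chaining the three equivalences $T_1\parallel T_2 \Leftrightarrow T_1^*\parallel T_2^* \Leftrightarrow$ (iii) closes the loop.

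I do not expect a genuine obstacle here, since the corollary is essentially a repackaging of Theorem \ref{th.13}(iv) plus the $*$-symmetry. The only point requiring a little care is bookkeeping the indices: one must check that the statement "${T_i}^* T_i\parallel {T_j}^* T_i$ and $\|{T_j}^* T_i\|=\|T_i\|\,\|T_j\|$ for all $1\le i\ne j\le 2$" is equivalent to its single instance $i=1, j=2$, i.e.\ that the $i=2, j=1$ instance is redundant. This follows because both instances are individually equivalent to $T_1\parallel T_2$ (via Theorem \ref{th.13}(iv) applied to $(T_1,T_2)$ and to $(T_2,T_1)$ respectively), using symmetry of $\parallel$ and the identity $\|{T_1}^* T_2\| = \|{T_2}^* T_1\|$. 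So the proof reduces to: (i)$\Leftrightarrow$(ii) is Theorem \ref{th.13}(i)$\Leftrightarrow$(iv) up to this index remark, and (i)$\Leftrightarrow$(iii) is the same statement applied to $T_1^*, T_2^*$ via Proposition \ref{pr.11}.
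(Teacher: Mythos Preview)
Your approach is correct and is essentially what the paper does: it states the corollary as an immediate consequence of Theorem~\ref{th.13} without further argument, so your unpacking via Theorem~\ref{th.13}(i)$\Leftrightarrow$(iv), symmetry of $\parallel$, and passage to adjoints is exactly the intended route.

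One small bookkeeping slip to tidy up: clause~(ii) of the corollary with $i=1,\,j=2$ reads $T_1^*T_1\parallel T_2^*T_1$, whereas Theorem~\ref{th.13}(iv) reads $T_1^*T_1\parallel T_1^*T_2$; these are not literally identical but are equivalent because $T_1^*T_1$ is self-adjoint and $(T_1^*T_2)^*=T_2^*T_1$, so Proposition~\ref{pr.11}(ii) bridges them. The same remark applies in your derivation of~(iii): applying~(ii) to $(T_1^*,T_2^*)$ yields $T_iT_i^*\parallel T_jT_i^*$, not $T_iT_i^*\parallel T_iT_j^*$ as stated in the corollary; again one more invocation of Proposition~\ref{pr.11}(ii) fixes this. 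Once you insert these two adjoint steps explicitly, the proof is complete.
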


\begin{corollary}\label{co.15}
Let $T_1, T_2\in \mathbb{B}(\mathscr{H})$. Then the following statements are equivalent:\\
(i) $T_1\parallel T_2$;\\
(ii) $r(T_2^* T_1)=\|T_2^* T_1\|=\|T_1\|\,\|T_2\|$;\\
(iii) $r(T_1T_2^*)=\|T_1T_2^*\|=\|T_1\|\,\|T_2\|$.
\end{corollary}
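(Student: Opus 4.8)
\textbf{Proof proposal for Corollary \ref{co.15}.}

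The plan is to deduce this entirely from Theorem \ref{th.13} and Corollary \ref{co.14}, using the self-adjointness symmetry of the parallelism relation established in Proposition \ref{pr.11}(ii). The equivalence (i)$\Leftrightarrow$(ii) is already exactly the content of Theorem \ref{th.13}(i)$\Leftrightarrow$(iii), so nothing new is needed there; I would simply cite it. The only real task is to obtain (i)$\Leftrightarrow$(iii), and the idea is to pass to adjoints: by Proposition \ref{pr.11}, $T_1\parallel T_2$ holds if and only if $T_1^*\parallel T_2^*$.

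First I would apply the already-proved equivalence (i)$\Leftrightarrow$(ii) of this corollary (equivalently Theorem \ref{th.13}(iii)) to the pair $T_1^*, T_2^*$ in place of $T_1, T_2$: this yields that $T_1^*\parallel T_2^*$ is equivalent to $r\big((T_2^*)^*T_1^*\big)=\|(T_2^*)^*T_1^*\|=\|T_1^*\|\,\|T_2^*\|$, i.e. to $r(T_2T_1^*)=\|T_2T_1^*\|=\|T_1\|\,\|T_2\|$. Then I would chain this with $T_1\parallel T_2 \Leftrightarrow T_1^*\parallel T_2^*$ to conclude that (i) is equivalent to $r(T_2T_1^*)=\|T_2T_1^*\|=\|T_1\|\,\|T_2\|$. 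Finally, since parallelism is symmetric (noted right after Definition \ref{p}... actually after the definition of parallelism), swapping the roles of $T_1$ and $T_2$ shows this is the same as $r(T_1T_2^*)=\|T_1T_2^*\|=\|T_1\|\,\|T_2\|$, which is statement (iii). One small point worth recording: $\|T_2T_1^*\| = \|(T_2T_1^*)^*\| = \|T_1T_2^*\|$ and likewise $r(T_2T_1^*)=r(T_1T_2^*)$ because $r(AB)=r(BA)$ for bounded operators, so the symmetry step is automatic even without invoking symmetry of $\parallel$.

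I do not expect any genuine obstacle here, since all the analytic work was done in Theorem \ref{th.13}; the proof is purely a matter of bookkeeping with adjoints. The one thing to be careful about is making sure the substitution $T_i \leftrightarrow T_i^*$ is applied consistently — that $(T_2^*)^* = T_2$ and $\|T_i^*\| = \|T_i\|$ are used, and that the spectral-radius identity $r(AB) = r(BA)$ is invoked if one wants to match the exact form $r(T_1T_2^*)$ rather than $r(T_2T_1^*)$. Alternatively, one could cite Corollary \ref{co.14}(iii) directly, since that already states $T_iT_i^*\parallel T_iT_j^*$ with the norm condition, and combine it with the computation $r(T_1T_2^*)=\|T_1T_2^*\|$ following the same argument as in the (ii)$\Leftrightarrow$(iii) part of Theorem \ref{th.13}; but the adjoint route is shorter and cleaner, so that is the one I would write up.
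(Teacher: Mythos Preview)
Your proposal is correct and matches the paper's intended route: the paper gives no explicit proof for this corollary, treating it as immediate from Theorem \ref{th.13} (which gives (i)$\Leftrightarrow$(ii) verbatim) together with the adjoint symmetry of Proposition \ref{pr.11}(ii). Your bookkeeping with $(T_2^*)^*T_1^*=T_2T_1^*$ and the observation that $r(S^*)=r(S)$, $\|S^*\|=\|S\|$ (so $r(T_2T_1^*)=r(T_1T_2^*)$ and $\|T_2T_1^*\|=\|T_1T_2^*\|$) is exactly what is needed to land on the stated form of (iii).
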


We need the next lemma for studying approximate parallelism.

\begin{lemma}\label{le.16}\cite[Proposition 2.1]{ar.2}
Let $T_1, T_2\in \mathbb{B}(\mathscr{H})$. Then $$\inf\{\|T_1+\mu T_2\|^2 :\, \mu\in\mathbb{C}\}=\sup\{M_{T_1, T_2}(\xi) :\, \xi\in\mathbb{C},\,\|\xi\|=1\},$$
where
$$M_{T_1, T_2}(\xi)=\begin{cases}
\|T_1\xi\|^2-\frac{|(T_1\xi\mid T_2\xi)|^2}{\|T_2\xi\|^2} &\text{if\, $T_2\xi\neq0$}\\
\|T_1\xi\|^2       &\text{if\, $T_2\xi=0$}.
\end{cases}$$
\end{lemma}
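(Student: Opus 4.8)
The plan is to realize $\inf_{\mu}\|T_1+\mu T_2\|^2$ as a supremum over unit vectors $\xi$ of the best pointwise approximation, and to do the interchange of $\inf_\mu$ and $\sup_\xi$ carefully. First I would record the elementary pointwise fact: for a fixed unit vector $\xi$, the scalar function $\mu\mapsto\|T_1\xi+\mu T_2\xi\|^2=\|T_1\xi\|^2+2\operatorname{Re}\bigl(\overline{\mu}(T_1\xi\mid T_2\xi)\bigr)+|\mu|^2\|T_2\xi\|^2$ is minimized, when $T_2\xi\neq0$, at $\mu=-(T_2\xi\mid T_1\xi)/\|T_2\xi\|^2$, with minimum value exactly $M_{T_1,T_2}(\xi)=\|T_1\xi\|^2-|(T_1\xi\mid T_2\xi)|^2/\|T_2\xi\|^2$; when $T_2\xi=0$ the value is constantly $\|T_1\xi\|^2$. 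So $\inf_\mu\|T_1\xi+\mu T_2\xi\|^2 = M_{T_1,T_2}(\xi)$ for each $\xi$.

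The easy inequality is $\inf_\mu\|T_1+\mu T_2\|^2\geq\sup_{\|\xi\|=1}M_{T_1,T_2}(\xi)$: for any fixed $\mu$, $\|T_1+\mu T_2\|^2\geq\|T_1\xi+\mu T_2\xi\|^2\geq M_{T_1,T_2}(\xi)$ for every unit $\xi$, so taking the sup over $\xi$ and then the inf over $\mu$ gives it. For the reverse inequality the natural route is to invoke the cited result \cite[Proposition 2.1]{ar.2} directly, since the lemma is stated as a verbatim quotation of it; I would simply reference that. If instead one wanted a self-contained argument, I would set $d^2=\sup_{\|\xi\|=1}M_{T_1,T_2}(\xi)$ and try to produce a single $\mu_0$ with $\|T_1+\mu_0 T_2\|^2\leq d^2$. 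One approach: show the map $\mu\mapsto\|T_1+\mu T_2\|$ is convex and coercive (coercive because $\|T_1+\mu T_2\|\geq|\mu|\,\|T_2\xi_0\|-\|T_1\|$ for a unit vector $\xi_0$ with $T_2\xi_0\neq0$, assuming $T_2\neq0$; the case $T_2=0$ is trivial), hence attains its infimum at some $\mu_0$; then a variational/minimax argument — e.g. Sion's minimax theorem applied to $(\mu,\xi)\mapsto\|T_1\xi+\mu T_2\xi\|^2$ on a product of the (weakly compact) unit ball of $\mathscr H$ and a large closed disk in $\mathbb C$ containing $\mu_0$, using that this function is convex in $\mu$ and weakly lower/upper semicontinuous appropriately in $\xi$ — yields $\inf_\mu\sup_\xi=\sup_\xi\inf_\mu$.

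The main obstacle is exactly this interchange of $\inf$ and $\sup$: it is false for general functions and requires either the compactness/convexity hypotheses of a minimax theorem or the specific structure exploited in \cite{ar.2}. Since the statement is explicitly attributed to \cite[Proposition 2.1]{ar.2}, the cleanest and intended proof is to cite that reference and supply only the elementary pointwise minimization computation above as the bridge between the quoted formula and the notation $M_{T_1,T_2}(\xi)$. I would therefore present the short pointwise calculation, note the trivial inequality $\inf_\mu\|T_1+\mu T_2\|^2\geq\sup_\xi M_{T_1,T_2}(\xi)$, and cite \cite{ar.2} for equality, remarking that the reverse inequality rests on a minimax-type argument there.
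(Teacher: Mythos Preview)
Your proposal is correct and matches the paper's approach: the paper gives no proof at all for this lemma, simply stating it as a quotation of \cite[Proposition 2.1]{ar.2}. Your additional exposition (the pointwise minimization yielding $M_{T_1,T_2}(\xi)$ and the easy inequality $\inf_\mu \geq \sup_\xi$) goes beyond what the paper provides, but your bottom line---cite \cite{ar.2} for the equality---is exactly what the paper does.
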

In the following proposition we present a characterization of the operator $\varepsilon$-parallelism $\parallel^{\varepsilon}$. Recall that for $T_1, T_2\in \mathbb{B}(\mathscr{H})$ and $\varepsilon\in[0 , 1)$, $T_1\parallel^{\varepsilon} T_2$ if
$\inf\{\|T_1+\mu T_2\| :\, \mu\in\mathbb{C}\}\leq\varepsilon\|T_1\|$.
\begin{theorem}\label{th.16}
Let $T_1, T_2\in \mathbb{B}(\mathscr{H})$ and $\varepsilon\in[0, 1)$. Then the following statements are equivalent:\\
(i) $T_1\parallel^{\varepsilon} T_2$;\\
(ii) $T_1^*\parallel^{\varepsilon} T_2^*$;\\
(iii) $\alpha T_1\parallel^{\varepsilon} \beta T_2$, \ \ \ \ \ $(\alpha, \beta\in\mathbb{C}\smallsetminus\{0\})$.\\
(iv) $\sup\{|(T_1\xi\mid \eta)|:\, \|\xi\|=\|\eta\|=1, \, (T_2\xi\mid \eta)=0\}\leq\varepsilon\|T_1\|.$\\
Moreover, each of the above conditions implies\\
(v) $|(T_1\xi\mid T_2\xi)|^2\geq\|T_1\xi\|^2\|T_2\xi\|^2-\varepsilon^2\|T_1\|^2\|T_2\|^2,\,\,\,\,\,\,\,(\xi\in\mathscr{H}, \|\xi\|=1).$
\end{theorem}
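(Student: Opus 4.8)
The plan is to establish the four-way equivalence (i)$\Leftrightarrow$(ii)$\Leftrightarrow$(iii)$\Leftrightarrow$(iv) first, then derive (v) from (iv) (or directly from the infimum formula via Lemma~\ref{le.16}). The equivalences (i)$\Leftrightarrow$(ii) and (i)$\Leftrightarrow$(iii) should follow by the same elementary manipulations used in Proposition~\ref{pr.11} and Theorem~\ref{th.13}: for (ii), note that $\inf_{\mu}\|T_1+\mu T_2\| = \inf_{\mu}\|(T_1+\mu T_2)^*\| = \inf_{\mu}\|T_1^*+\bar\mu T_2^*\|$ and $\|T_1\|=\|T_1^*\|$, so the defining inequality is unchanged; for (iii), I would reduce to the case $\alpha=1$ using (iv) of Proposition~\ref{pr.11}-style scaling (replacing $\mu$ by $\mu\beta/\alpha$ absorbs the constants, and $\|\alpha T_1\| = |\alpha|\,\|T_1\|$ scales both sides of \eqref{ap} by $|\alpha|$).

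The substantive step is (i)$\Leftrightarrow$(iv). Here I would invoke Lemma~\ref{le.16}: $T_1\parallel^{\varepsilon}T_2$ is equivalent to $\sup_{\|\xi\|=1} M_{T_1,T_2}(\xi) \leq \varepsilon^2\|T_1\|^2$. So it suffices to show that $\sup_{\|\xi\|=1} M_{T_1,T_2}(\xi)$ equals the square of the quantity in (iv), namely $S := \sup\{|(T_1\xi\mid\eta)| : \|\xi\|=\|\eta\|=1,\ (T_2\xi\mid\eta)=0\}$. For a fixed unit $\xi$, the inner supremum of $|(T_1\xi\mid\eta)|$ over unit $\eta$ orthogonal to $T_2\xi$ is exactly the norm of the projection of $T_1\xi$ onto $\{T_2\xi\}^{\perp}$: if $T_2\xi\neq 0$ this projection has squared norm $\|T_1\xi\|^2 - |(T_1\xi\mid T_2\xi/\|T_2\xi\|)|^2 = \|T_1\xi\|^2 - |(T_1\xi\mid T_2\xi)|^2/\|T_2\xi\|^2 = M_{T_1,T_2}(\xi)$, and if $T_2\xi=0$ every unit $\eta$ is admissible so the inner sup is $\|T_1\xi\| $, whose square is again $M_{T_1,T_2}(\xi)$. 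Taking the supremum over $\xi$ gives $S^2 = \sup_{\xi} M_{T_1,T_2}(\xi)$, and the equivalence with (i) follows. The main obstacle here is just a careful treatment of the degenerate case $T_2\xi = 0$ and making sure the "sup of a sup equals sup" bookkeeping is airtight; there is also a minor subtlety that the constraint $(T_2\xi\mid\eta)=0$ may force $\eta$ over a lower-dimensional space, but this is handled by the orthogonal projection description.

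Finally, for (v), assume (iv) holds, equivalently (by the above) $M_{T_1,T_2}(\xi) \leq \varepsilon^2\|T_1\|^2$ for every unit $\xi$. Fix a unit $\xi$. If $T_2\xi\neq 0$, then $\|T_1\xi\|^2 - |(T_1\xi\mid T_2\xi)|^2/\|T_2\xi\|^2 \leq \varepsilon^2\|T_1\|^2$, so multiplying through by $\|T_2\xi\|^2 \leq \|T_2\|^2$ yields
\[
|(T_1\xi\mid T_2\xi)|^2 \geq \|T_1\xi\|^2\|T_2\xi\|^2 - \varepsilon^2\|T_1\|^2\|T_2\xi\|^2 \geq \|T_1\xi\|^2\|T_2\xi\|^2 - \varepsilon^2\|T_1\|^2\|T_2\|^2,
\]
which is (v); if $T_2\xi=0$ then both $|(T_1\xi\mid T_2\xi)|$ and $\|T_2\xi\|$ vanish and the inequality reads $0 \geq -\varepsilon^2\|T_1\|^2\|T_2\|^2$, trivially true. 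This direction is routine once the infimum-supremum dictionary of Lemma~\ref{le.16} is in hand, and it is expected to be strictly weaker than (i)--(iv), which is why it is stated only as a consequence.
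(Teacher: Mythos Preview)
Your argument is correct and, for (i)$\Leftrightarrow$(ii), (i)$\Leftrightarrow$(iii), and the derivation of (v), it matches the paper's proof essentially verbatim. The one genuine difference is in (i)$\Leftrightarrow$(iv): the paper simply invokes the Bhatia--\v{S}emrl identity
\[
\inf_{\mu\in\mathbb{C}}\|T_1+\mu T_2\| = \sup\{|(T_1\xi\mid\eta)| : \|\xi\|=\|\eta\|=1,\ (T_2\xi\mid\eta)=0\}
\]
as a black box, whereas you re-derive this identity from Lemma~\ref{le.16} by computing, for each fixed unit $\xi$, the inner supremum over $\eta\perp T_2\xi$ as the norm of the orthogonal projection of $T_1\xi$ onto $\{T_2\xi\}^{\perp}$ and recognising its square as $M_{T_1,T_2}(\xi)$. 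Your route is more self-contained (it avoids the external citation and in effect reproves the Bhatia--\v{S}emrl formula from the Aramba\v{s}i\'{c}--Raji\'{c} lemma already quoted in the paper), at the cost of the small bookkeeping you flag about degenerate $\eta$-sets; the paper's route is shorter but rests on an outside reference. Both are valid.
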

\begin{proof}
(i)$\Leftrightarrow$(ii) is obvious.\\
(i)$\Leftrightarrow$(iii) Let $T_1\parallel^{\varepsilon} T_2$ and $\alpha, \beta\in\mathbb{C}\smallsetminus\{0\}$. Then
\begin{align*}
\inf\{\|\alpha T_1+\mu(\beta T_2)\| :\, \mu\in\mathbb{C}\}&=|\alpha|\inf\{\|T_1+\frac{\mu\beta}{\alpha}T_2\| :\, \mu\in\mathbb{C}\}
\\&\leq |\alpha|\inf\{\|T_1+\nu T_2\| :\, \nu\in\mathbb{C}\}
\leq|\alpha|\varepsilon\|T_1\|=\varepsilon\|\alpha T_1\|.
\end{align*}
Therefore, $\alpha T_1\parallel^{\varepsilon} \beta T_2$. The converse is obvious.\\
(i)$\Leftrightarrow$(iv) Bhatia and \v{S}emrl \cite[Remark 3.1]{BE} proved that
$$\inf\{\|T_1+\mu T_2\| :\, \mu\in\mathbb{C}\}=\sup\{|(T_1\xi\mid \eta)|:\, \|\xi\|=\|\eta\|=1, \, (T_2\xi\mid \eta)=0\}$$
Thus the required equivalence follows from the above equality.\\
Now suppose that $T_1\parallel^{\varepsilon} T_2$. Hence $\inf\{\|T_1+\mu T_2\| :\, \mu\in\mathbb{C}\}\leq\varepsilon\|T_1\|$. For any $\xi\in \mathscr{H}$ with $\|\xi\|=1$, by Lemma \ref{le.16}, we therefore get
\begin{align*}
\|T_1\xi\|^2\|T_2\xi\|^2-|(T_1\xi\mid T_2\xi)|^2&\leq\|T_2\xi\|^2\inf\{\|T_1+\mu T_2\|^2 :\, \mu\in\mathbb{C}\}
\\&\leq\|T_2\xi\|^2\varepsilon^2\|T_1\|^2
\leq\varepsilon^2\|T_1\|^2\|T_2\|^2\|\xi\|^2
=\varepsilon^2\|T_1\|^2\|T_2\|^2.
\end{align*}
\end{proof}
In the following result we establish some equivalence statements to the approximate parallelism for elements of a Hilbert space. We use some techniques of \cite[Corollary 2.7]{Mos} to prove this corollary.
\begin{corollary}\label{co.17}
Let $\xi, \eta\in \mathscr{H}$. Then for any $\varepsilon\in[0 , 1)$ the following statements are equivalent:\\
(i) $\xi\parallel^{\varepsilon} \eta$;\\
(ii) $\sup\{|(\xi\mid \zeta)|:\,\zeta\in\mathscr{H}, \|\zeta\|=1, (\eta\mid \zeta)=0\}\leq\varepsilon\|\xi\|$;\\
(iii) $|(\xi\mid \eta)|\geq\sqrt{1-\varepsilon^2}\|\xi\|\,\|\eta\|$;\\
(v) $\Big\|\,\|\eta\|^2\xi-(\xi\mid \eta)\eta\Big\|\leq\varepsilon\|\xi\|\,\|\eta\|^2$.
\end{corollary}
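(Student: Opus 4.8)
The plan is to derive everything from Theorem~\ref{th.16}, which already handles a general pair $T_1,T_2\in\mathbb{B}(\mathscr{H})$, by specializing to rank-one operators. The natural device is to represent vectors by operators: given $\xi,\eta\in\mathscr{H}$, fix any unit vector $e\in\mathscr{H}$ and set $T_1=\xi\otimes e$ and $T_2=\eta\otimes e$. Then $\|T_1\|=\|\xi\|$, $\|T_2\|=\|\eta\|$, and $T_1+\mu T_2=(\xi+\mu\eta)\otimes e$, so $\inf_{\mu}\|T_1+\mu T_2\|=\inf_{\mu}\|\xi+\mu\eta\|$. Hence $\xi\parallel^{\varepsilon}\eta$ (as vectors) is literally the same statement as $T_1\parallel^{\varepsilon}T_2$ (as operators). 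First I would record this reduction carefully, checking the norm identities and that $(T_2^*T_1\zeta\mid\zeta)$-type quantities transcribe to the inner products $(\xi\mid\eta)$ one expects.

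Next, (i)$\Leftrightarrow$(ii): the operator condition (iv) of Theorem~\ref{th.16} reads $\sup\{|(T_1\xi'\mid\eta')|:\|\xi'\|=\|\eta'\|=1,\ (T_2\xi'\mid\eta')=0\}\le\varepsilon\|T_1\|$. With $T_i$ as above, $T_1\xi'=(\xi'\mid e)\xi$ and $T_2\xi'=(\xi'\mid e)\eta$, so for $\xi'$ not orthogonal to $e$ the constraint $(T_2\xi'\mid\eta')=0$ becomes $(\eta\mid\eta')=0$ and the quantity becomes $|(\xi'\mid e)|\,|(\xi\mid\eta')|$, whose supremum over admissible $\xi'$ is $|(\xi\mid\eta')|$; the $\xi'\perp e$ case contributes $0$. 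Taking the supremum over unit $\eta'$ with $\eta'\perp\eta$ gives exactly condition (ii). I would write this out, being a little careful about whether $\eta=0$, in which case both sides are transparent.

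For (i)$\Leftrightarrow$(iii), I would invoke condition (v) of Theorem~\ref{th.16}, but note that for the rank-one choice above, inequality (v) is equivalent to (i): with $\xi'$ a unit vector satisfying $(\xi'\mid e)\ne0$ we get, after dividing by $|(\xi'\mid e)|^2$, the single inequality $|(\xi\mid\eta)|^2\ge\|\xi\|^2\|\eta\|^2-\varepsilon^2\|\xi\|^2\|\eta\|^2$, i.e. $|(\xi\mid\eta)|\ge\sqrt{1-\varepsilon^2}\,\|\xi\|\,\|\eta\|$, which is (iii); conversely this inequality forces the infimum bound via Lemma~\ref{le.16} applied to the rank-one operators (or more directly by minimizing $\|\xi+\mu\eta\|^2$ in $\mu$). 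Alternatively, and perhaps more cleanly, one computes $\inf_{\mu}\|\xi+\mu\eta\|^2$ directly: if $\eta\ne0$ the minimizer is $\mu_0=-(\eta\mid\xi)/\|\eta\|^2$ and the minimum value is $\|\xi\|^2-|(\xi\mid\eta)|^2/\|\eta\|^2$, so $\xi\parallel^{\varepsilon}\eta$ reads $\|\xi\|^2-|(\xi\mid\eta)|^2/\|\eta\|^2\le\varepsilon^2\|\xi\|^2$, which rearranges to (iii). Finally (iii)$\Leftrightarrow$(v) is a routine algebraic identity: expanding $\big\|\,\|\eta\|^2\xi-(\xi\mid\eta)\eta\big\|^2=\|\eta\|^4\|\xi\|^2-\|\eta\|^2|(\xi\mid\eta)|^2$ shows the inequality in (v) squared is $\|\eta\|^4\|\xi\|^2-\|\eta\|^2|(\xi\mid\eta)|^2\le\varepsilon^2\|\xi\|^2\|\eta\|^4$, which after dividing by $\|\eta\|^2$ is exactly (iii) squared.

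I expect the main (minor) obstacle to be bookkeeping around degenerate cases: $\xi=0$ or $\eta=0$, where several ``$\sup$'' sets may be empty or the minimizer $\mu_0$ undefined, and where the equivalences should still hold by inspection (if $\eta=0$ then $\inf_\mu\|\xi+\mu\eta\|=\|\xi\|$ and $\xi\parallel^\varepsilon\eta$ fails unless $\xi=0$, matching the right-hand sides). The other point needing a line of justification is that the reduction to rank-one operators is faithful in \emph{both} directions for each equivalence—i.e., that the operator-level conditions (iv) and (v) of Theorem~\ref{th.16} become \emph{equivalent} (not merely implied) to the vector conditions under this specialization, which is true because the extra supremum over $\xi'$ is attained and factors out cleanly. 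Once these are dispatched, the proof is essentially a translation exercise.
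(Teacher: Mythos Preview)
Your proposal is correct and follows essentially the same route as the paper: reduce $\xi,\eta$ to rank-one operators $T_1=\xi\otimes e$, $T_2=\eta\otimes e$, invoke Theorem~\ref{th.16}(iv) for the equivalence (i)$\Leftrightarrow$(ii), and use Lemma~\ref{le.16} (equivalently, your direct minimization of $\|\xi+\mu\eta\|^2$) together with the algebraic expansion of $\big\|\,\|\eta\|^2\xi-(\xi\mid\eta)\eta\big\|^2$ for (i)$\Leftrightarrow$(iii)$\Leftrightarrow$(v). One caution on your edge-case remark: when $\eta=0$ and $\xi\neq0$, conditions (iii) and (v) are vacuously satisfied while (i) and (ii) fail, so the equivalences as stated require $\eta\neq0$ (the paper's proof tacitly assumes this too).
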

\begin{proof}
Let $\psi$ be a unit vector of $\mathscr{H}$ and set $T_1=\xi\otimes \psi$ and $T_2=\eta\otimes \psi$ as rank one operators. A straightforward computation shows that $\xi\parallel^{\varepsilon} \eta$ if and only if $T_1\parallel^{\varepsilon} T_2$. It follows from the elementary properties of rank one operators and Lemma \ref{le.16} that\\
$M_{T_1, T_2}(\xi)=\begin{cases}
|(\xi\mid \psi)|^2\left(\|\xi\|^2-\frac{|(\xi\mid \eta)|^2}{\|\eta\|^2}\right) &\text{if $(\xi\mid \psi)\eta\neq0$}\\
|(\xi\mid \psi)|^2\|\xi\|^2       &\text{if $(\xi\mid \psi)\eta=0$}.
\end{cases}$\\
Thus we reach
\begin{align*}
\xi\parallel^{\varepsilon} \eta\,& \Longleftrightarrow \,T_1\parallel^{\varepsilon} T_2
\\& \Longleftrightarrow \, \sup\{M_{T_1, T_2}(\xi) :\, \xi\in\mathbb{C},\,\|\xi\|=1\}\leq\varepsilon^2\|T_1\|^2
\\& \Longleftrightarrow \, \|\xi\|^2\,\|\eta\|^2-|(\xi\mid \eta)|^2\leq\varepsilon^2\|\xi\|^2\|\eta\|^2
\\& \Longleftrightarrow \, |(\xi\mid \eta)|\geq\sqrt{1-\varepsilon^2}\|\xi\|\,\|\eta\|
\\& \Longleftrightarrow \, \Big\|\,\|\eta\|^2\xi-(\xi\mid \eta)\eta\Big\|\leq\varepsilon\|\xi\|\,\|\eta\|^2.
\end{align*}
Further, by the equivalence (i)$\Leftrightarrow$(iv) of Theorem \ref{th.16} yields the
\begin{align*}
\xi\parallel^{\varepsilon} \eta\,& \Longleftrightarrow \,T_1\parallel^{\varepsilon} T_2
\\& \Longleftrightarrow \, \sup\{|(T_1\omega\mid \zeta)|:\, \|\omega\|=\|\zeta\|=1, \, (T_2\omega\mid \zeta)=0\}\leq\varepsilon\|T_1\|
\\& \Longleftrightarrow \, \sup\{|(\omega\mid \psi)|\,|(\xi\mid \zeta)|:\, \|\omega\|=\|\zeta\|=1, \, (\omega\mid \psi)(\eta\mid \zeta)=0\}\leq\varepsilon\|\xi\|
\\& \Longleftrightarrow \, \sup\{|(\xi\mid \zeta)|:\,\zeta\in\mathscr{H}, \|\zeta\|=1, (\eta\mid \zeta)=0\}\leq\varepsilon\|\xi\|.
\end{align*}
\end{proof}

\begin{remark}
If we choose $\varepsilon=0$ in Corollary \ref{co.17}, we reach the fact that two vectors in a Hilbert space are parallel if and only if they are proportional.
\end{remark}

Next, we investigate the case when an operator is parallel to the identity operator.
\begin{theorem}\label{th.19}
Let $T\in \mathbb{B}(\mathscr{H})$. Then the following statements are equivalent:\\
(i) $T\parallel I$;\\
(ii) $T\parallel T^*$;\\
(iii) There exist a sequence of unit vectors $\{\xi_n\}$ in $\mathscr{H}$ and $\lambda\in\mathbb{T}$ such that $$\lim_{n\rightarrow\infty} \Big\|T\xi_n-\lambda\|T\|\xi_n\Big\|=0;$$
(iv) $T^m\parallel I\,\,\,\,\,\,(m\in\mathbb{N})$;\\
(v) $T^m\parallel {T^*}^m\,\,\,\,\,\,(m\in\mathbb{N})$.
\end{theorem}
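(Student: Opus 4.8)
The strategy is to prove the cyclic chain of implications (i) $\Rightarrow$ (iii) $\Rightarrow$ (ii) $\Rightarrow$ (i), and then to obtain (iv) and (v) as applications of Theorem \ref{th.13} together with the equivalence (i) $\Leftrightarrow$ (ii). For the first implication, I would invoke the equivalence (i) $\Leftrightarrow$ (ii) of Theorem \ref{th.13} with $T_1 = T$ and $T_2 = I$: if $T \parallel I$, there exist unit vectors $\{\xi_n\}$ and $\lambda \in \mathbb{T}$ with $\lim_n (T\xi_n \mid \xi_n) = \lambda\|T\|$. As in the proof of (ii) $\Rightarrow$ (i) in Theorem \ref{th.13}, one deduces $\lim_n \|T\xi_n\| = \|T\|$, and then
\begin{align*}
\big\|T\xi_n - \lambda\|T\|\xi_n\big\|^2 = \|T\xi_n\|^2 - 2\,\mathrm{Re}\big(\overline{\lambda}\|T\|(T\xi_n \mid \xi_n)\big) + \|T\|^2 \longrightarrow \|T\|^2 - 2\|T\|^2 + \|T\|^2 = 0,
\end{align*}
which is (iii). (The case $T = 0$ is trivial throughout, so I assume $T \neq 0$.)

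For (iii) $\Rightarrow$ (ii): given $\{\xi_n\}$ and $\lambda$ with $\|T\xi_n - \lambda\|T\|\xi_n\| \to 0$, I would compute $(T\xi_n \mid T^*\xi_n)$. Since $T\xi_n = \lambda\|T\|\xi_n + o(1)$ and likewise taking adjoints of the approximate eigenvalue relation gives $T^*\xi_n = \overline{\lambda}\|T\|\xi_n + o(1)$ (note $\|T^*\xi_n\| \to \|T\|$ as well), we get $(T\xi_n \mid T^*\xi_n) \to (\lambda\|T\|\xi_n \mid \overline{\lambda}\|T\|\xi_n) = \lambda^2\|T\|^2 = \lambda^2\|T\|\,\|T^*\|$. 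Since $\lambda^2 \in \mathbb{T}$, the equivalence (i) $\Leftrightarrow$ (ii) of Theorem \ref{th.13} (applied to $T_1 = T$, $T_2 = T^*$) yields $T \parallel T^*$. The implication (ii) $\Rightarrow$ (i) should follow from Corollary \ref{co.14} or Corollary \ref{co.15}: if $T \parallel T^*$ then by Corollary \ref{co.15} we have $r(T T^*) = \|TT^*\| = \|T\|\,\|T^*\| = \|T\|^2$, which forces $r(T^* T) = \|T\|^2$ as well (since $T^*T$ and $TT^*$ have the same nonzero spectrum and the same norm), and then the equivalence (i) $\Leftrightarrow$ (iii) of Theorem \ref{th.13} with $T_1 = T$, $T_2 = I$ gives $T \parallel I$ because $r(I^* T) = r(T) \le \|T\|$... here one must be a little careful, so instead I would argue directly: $T\parallel T^*$ gives an approximate eigenvector sequence for $T^*T$ at the top of the spectrum, hence for $T$ at an eigenvalue of modulus $\|T\|$, which is exactly (iii), and (iii) $\Rightarrow$ (i) is then immediate from $\|T + \lambda^{-1} I\|$ ... actually from $\|(T - \mu I)\xi_n\| \to 0$ with $|\mu| = \|T\|$ one gets $\|T - \mu I\| \ge \|T\| + \|I\|$ by choosing the unimodular rotation $-\mu/|\mu|$, giving $T \parallel I$.

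For (i) $\Leftrightarrow$ (iv): one direction is trivial ($m=1$). For $T \parallel I \Rightarrow T^m \parallel I$, I would use (iii): there are unit vectors $\xi_n$ with $T\xi_n - \lambda\|T\|\xi_n \to 0$, hence by iterating, $T^m \xi_n - \lambda^m\|T\|^m \xi_n \to 0$; since $\|T^m\| \ge \|T^m\xi_n\| \to \|T\|^m \ge \|T^m\|$ (the reverse inequality $\|T^m\| \le \|T\|^m$ is submultiplicativity), we have $\|T^m\| = \|T\|^m$ and $\lambda^m\|T\|^m = \lambda^m\|T^m\|$ with $\lambda^m \in \mathbb{T}$, so (iii) holds for $T^m$, whence $T^m \parallel I$. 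Finally (iv) $\Leftrightarrow$ (v) is just the equivalence (i) $\Leftrightarrow$ (ii) of the present theorem applied to $T^m$ in place of $T$. The main obstacle is the implication (ii) $\Rightarrow$ (i): one must correctly exploit that $T \parallel T^*$ produces a sequence on which $T^*T$ approaches $\|T\|^2 I$, which in turn forces $T$ to approximately have an eigenvalue of modulus $\|T\|$ — the cleanest route is probably via the state $\varphi$ with $\varphi((T^*T)) = \|T\|^2$ as in the proof of Theorem \ref{th.13}, or via approximate eigenvectors of the positive operator $|T|$, being careful that an approximate eigenvector of $T^*T$ for $\|T\|^2$ need not a priori be one for $T$ — but it is, since $\|(T^*T - \|T\|^2 I)\xi_n\| \to 0$ implies $\|(|T| - \|T\| I)\xi_n\| \to 0$ (continuous functional calculus on the positive operator $T^*T$), and then $\|T\xi_n\|^2 = (T^*T\xi_n \mid \xi_n) \to \|T\|^2$, and $T/\|T\|$ restricted appropriately is close to a partial isometry, giving the unimodular $\lambda$ from the polar decomposition.
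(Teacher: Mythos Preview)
Your cycle (i)$\Rightarrow$(iii)$\Rightarrow$(ii)$\Rightarrow$(i) is a reasonable plan, and your treatment of (i)$\Rightarrow$(iii), (iii)$\Rightarrow$(iv) and (iv)$\Leftrightarrow$(v) matches the paper. Two links, however, contain real gaps.

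In (iii)$\Rightarrow$(ii) you assert that ``taking adjoints of the approximate eigenvalue relation gives $T^*\xi_n=\overline{\lambda}\,\|T\|\,\xi_n+o(1)$.'' That is not a valid inference: from $\|T\xi_n-\lambda\|T\|\xi_n\|\to 0$ one cannot pass to $T^*\xi_n$ by ``taking adjoints.'' The conclusion is nevertheless true, but it needs an argument: since $(T^*T\xi_n\mid\xi_n)=\|T\xi_n\|^2\to\|T\|^2$ and $0\le T^*T\le\|T\|^2 I$, one gets $\|(T^*T-\|T\|^2 I)\xi_n\|\to 0$; now apply $T^*$ to $T\xi_n-\lambda\|T\|\xi_n\to 0$ and combine to obtain $\lambda\|T\|\,T^*\xi_n-\|T\|^2\xi_n\to 0$, hence $T^*\xi_n-\overline{\lambda}\,\|T\|\,\xi_n\to 0$.

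In (ii)$\Rightarrow$(i) you misapply Corollary~\ref{co.15}: with $T_1=T$ and $T_2=T^*$ the relevant product is $T_2^*T_1=T\cdot T=T^2$, \emph{not} $TT^*$. (The identity $r(TT^*)=\|TT^*\|=\|T\|^2$ holds for every $T$, since $TT^*$ is self-adjoint, and carries no information.) Read correctly, Corollary~\ref{co.15} yields $r(T^2)=\|T^2\|=\|T\|^2$, hence $r(T)=\|T\|$; then Theorem~\ref{th.13}\,(iii) with $T_2=I$ (so $T_2^*T_1=T$) gives $T\parallel I$ immediately. That is precisely the ``careful'' route you were searching for. Your fallback attempts do not work as written: an approximate eigenvector of $T^*T$ at $\|T\|^2$ need not be one for $T$ at any value (take $T$ a $2\times 2$ nilpotent), and the polar-decomposition sketch never produces the required unimodular $\lambda$. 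Your description of (iii)$\Rightarrow$(i) is also garbled: with $\mu=\lambda\|T\|$ you want $\|T+\lambda I\|\ge\|T\|+1$ for $\lambda=\mu/|\mu|$, not ``$\|T-\mu I\|\ge\|T\|+\|I\|$.''

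For comparison, the paper handles (i)$\Leftrightarrow$(ii) by a different device, namely states: it chooses a state $\varphi$ realising $\|(T+\lambda I)(T+\lambda I)^*\|$ (for one direction) and one realising $|\varphi(T+\lambda T^*)|=\|T+\lambda T^*\|$ on the normal element $T+\lambda T^*$ (for the other), reading off $|\varphi(T)|=\|T\|$ in each case.
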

\begin{proof}
(i)$\Leftrightarrow$(ii) Let $T\parallel I$. Then $\|T+\lambda I\|=\|T\|+1$ for some $\lambda\in\mathbb{T}$. By \cite[Theorem 3.3.6]{mor} there is a state $\varphi$ over $\mathbb{B}(\mathscr{H})$ such that
$$\varphi\left((T+\lambda I)(T+\lambda I)^*\right)=\|(T+\lambda I)(T+\lambda I)^*\|=\|T+\lambda I\|^2=(\|T\|+1)^2.$$
Thus
\begin{align*}
(\|T\|+1)^2&=\varphi((T+\lambda I)(T+\lambda I)^*)
=\varphi(TT^*)+\varphi(\overline{\lambda}T)+\varphi(\lambda T^*)+1
\\&\leq\|TT^*\|+\|\overline{\lambda}T\|+\|\lambda T^*\|+1
=\|T\|^2+2\|T\|+1=(\|T\|+1)^2.
\end{align*}
Therefore $\varphi(\overline{\lambda}T)=\varphi(\lambda T^*)=\|T\|$. This implies that
$$\|T\|+\|T^*\|=\varphi(\overline{\lambda}T+\lambda T^*)\leq\|\overline{\lambda}T+\lambda T^*\|=\|T+\lambda^2 T^*\|\leq\|T\|+\|T^*\|.$$
Therefore $\|T+\lambda^2 T^*\|=\|T\|+\|T^*\|$ in which $\lambda^2\in\mathbb{T}$. Thus $T\parallel T^*$.\\
To prove the converse, suppose that $T\parallel T^*$, or equivalently, $\|T+\lambda T^*\|=2\|T\|$ for some $\lambda\in\mathbb{T}$. By \cite[Theorem 3.3.6]{mor} there is a state $\varphi$ over $\mathbb{B}(\mathscr{H})$ such that $|\varphi(T+\lambda T^*)|=\|T+\lambda T^*\|=2\|T\|$. Thus we get
$2\|T\|=|\varphi(T+\lambda T^*)|\leq2|\varphi(T)|\leq2\|T\|$, from which it follows that $|\varphi(T)|=\|T\|$. Hence there exists a number $\mu\in\mathbb{T}$ such that $\varphi(T)=\mu\|T\|$. Therefore
$$\|T\|+1=\varphi(\overline{\mu}T+I)\leq\|\overline{\mu}T+I\|=\|T+\mu I\|\leq\|T\|+1,$$
whence $\|T+\mu I\|=\|T\|+1$ for $\mu\in\mathbb{T}$. Thus $T\parallel I$.\\
(i)$\Longleftrightarrow$(iii) let $T\parallel I$. By Theorem \ref{th.13}, there exist a sequence of unit vectors $\{\xi_n\}$ in $\mathscr{H}$ and $\lambda\in\mathbb{T}$ such that $\lim_{n\rightarrow\infty} (T\xi_n\mid \xi_n)=\lambda\|T\|$. Since $\|T\|=\lim_{n\rightarrow\infty} |(T\xi_n\mid \xi_n)|\leq\lim_{n\rightarrow\infty} \|T\xi_n\|\leq\|T\|$, hence $\lim_{n\rightarrow\infty} \|T\xi_n\|=\|T\|$. Thus
\begin{align*}
\lim_{n\rightarrow\infty} \Big\|T\xi_n-\lambda\|T\|\xi_n\Big\|^2&=\lim_{n\rightarrow\infty} \left[\|T\xi_n\|^2-\overline{\lambda}\|T\|(T\xi_n\mid \xi_n)-\lambda\|T\|(\xi_n\mid T\xi_n)+\|T\|^2\right]
\\&=\|T\|^2-|\lambda|^2\|T\|^2-|\lambda|^2\|T\|^2+\|T\|^2=0.
\end{align*}
So that $\lim_{n\rightarrow\infty} \Big\|T\xi_n-\lambda\|T\|\xi_n\Big\|=0$.\\
Conversely, suppose that (iii) is holds. Then
\begin{align*}
1+\|T\|\geq\|T+\lambda I\|&\geq\|T\xi_n+\lambda \xi_n\|=\Big\|\lambda \xi_n+\lambda\|T\|\xi_n-(-T\xi_n+\lambda\|T\|\xi_n)\Big\|
\\&\geq\Big\|\lambda \xi_n+\lambda\|T\|\xi_n\Big\|-\Big\|-T\xi_n+\lambda\|T\|\xi_n\Big\|
\\&=1+\|T\|-\Big\|T\xi_n-\lambda\|T\|\xi_n\Big\|.
\end{align*}
By taking limits, we get
$$1+\|T\|\geq\|T+\lambda I\|\geq1+\|T\|,$$
so $\|T+\lambda I\|=1+\|T\|$, i.e, $T\parallel I$.\\
(iii)$\Longrightarrow$(iv) Let there exists a sequence of unit vectors $\{\xi_n\}$ in $\mathscr{H}$ and $\lambda\in\mathbb{T}$ such that $\lim_{n\rightarrow\infty} \Big\|T\xi_n-\lambda\|T\|\xi_n\Big\|=0$.
For any $k\in\mathbb{N}$ we have
\begin{align*}
\Big\|(T^{k+1}-\lambda^{k+1}\|T\|^{k+1}I)\xi_n\Big\|&=\Big\|T(T^{k}-\lambda^{k}\|T\|^{k}I)\xi_n+\lambda^{k}\|T\|^{k}(T-\lambda\|T\|I)\xi_n\Big\|
\\&\leq \|T\|\,\Big\|(T^{k}-\lambda^{k}\|T\|^{k}I)\xi_n\Big\|+\|T\|^k\,\Big\|(T-\lambda\|T\|I)\xi_n\Big\|
\end{align*}
Hence, by induction, we have $$\lim_{n\rightarrow\infty} \Big\|(T^m-\lambda^m\|T\|^mI)\xi_n\Big\|=0$$
for all $m\in\mathbb{N}$. We get $\|T\|^m\leq r(T^m)\leq\|T^m\|\leq\|T\|^m$. Hence $\|T\|^m=\|T^m\|$.
Now for $\mu=\lambda^m\in\mathbb{T}$ we have $$\lim_{n\rightarrow\infty} \Big\|T^m\xi_n-\mu\|T^m\|\xi_n\Big\|=\lim_{n\rightarrow\infty} \Big\|(T^m-\lambda^m\|T\|^mI)\xi_n\Big\|=0.$$
So by the equivalence (i)$\Leftrightarrow$(iii), we get $T^m\parallel I$.\\
The implications (iv)$\Longrightarrow$(v) and (v)$\Longrightarrow$(i), follow from the equivalence (i)$\Leftrightarrow$(ii).
\end{proof}
For $T\in \mathbb{B}(\mathscr{H})$ the operator $\delta_T(S)=TS-ST$ over $\mathbb{B}(\mathscr{H})$ is called an \emph{inner derivation}. Clearly $2\|T\|$ is a upper bound for $\|\delta_T\|$.
In the next result, we get a characterization of operator $\varepsilon$-parallelism.
\begin{corollary}\label{co.120}
Let $T\in \mathbb{B}(\mathscr{H})$ and $\varepsilon\in[0, 1)$. The following statements are equivalent:\\
(i) $T\parallel^{\varepsilon} I$;\\
(ii) $\sup\{\|T\xi-(T\xi\mid \xi)\xi\|:\, \|\xi\|=1\}\leq\varepsilon\|T\|$; \\
(iii) $\sup\{\|T\xi\|^2-|(T\xi\mid \xi)|^2:\, \|\xi\|=1\}\leq\varepsilon^2\|T\|^2$;\\
(iv) $\|\delta_T\|\leq2\varepsilon\|T\|$.
\end{corollary}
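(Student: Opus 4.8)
The plan is to prove the chain of equivalences (i)$\Leftrightarrow$(iii)$\Leftrightarrow$(ii) using the $\varepsilon$-parallelism machinery already developed, and then to link (iii) and (iv) via the standard Stampfli-type formula for the norm of an inner derivation. First I would observe that by Lemma \ref{le.16} applied to $T_1=T$ and $T_2=I$, the quantity $\inf\{\|T+\mu I\|^2:\mu\in\mathbb{C}\}$ equals $\sup\{M_{T,I}(\xi):\|\xi\|=1\}$, and since $I\xi=\xi\neq0$ for every unit vector, $M_{T,I}(\xi)=\|T\xi\|^2-|(T\xi\mid\xi)|^2$. Hence $T\parallel^{\varepsilon}I$ is, by definition, exactly the assertion $\sup\{\|T\xi\|^2-|(T\xi\mid\xi)|^2:\|\xi\|=1\}\leq\varepsilon^2\|T\|^2$, which is (iii). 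This disposes of (i)$\Leftrightarrow$(iii) immediately.

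Next I would establish (ii)$\Leftrightarrow$(iii). For a unit vector $\xi$, expanding gives $\|T\xi-(T\xi\mid\xi)\xi\|^2=\|T\xi\|^2-2\,\mathrm{Re}\,\overline{(T\xi\mid\xi)}(T\xi\mid\xi)+|(T\xi\mid\xi)|^2\|\xi\|^2=\|T\xi\|^2-|(T\xi\mid\xi)|^2$, so in fact $\|T\xi-(T\xi\mid\xi)\xi\|^2=\|T\xi\|^2-|(T\xi\mid\xi)|^2$ pointwise. Taking suprema over unit vectors $\xi$ converts the inequality in (ii) (after squaring) into the inequality in (iii) and vice versa, since $x\mapsto x^2$ is monotone on $[0,\infty)$. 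This is a routine computation and presents no obstacle.

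For the equivalence with (iv), I would invoke the known formula $\|\delta_T\|=2\,\inf\{\|T-\mu I\|:\mu\in\mathbb{C}\}$ (Stampfli's theorem; alternatively one can use the description of the distance to the scalars in terms of $\sup\{\|T\xi\|^2-|(T\xi\mid\xi)|^2\}$ which appears in \cite{ar.2}). Replacing $\mu$ by $-\mu$ does not change the infimum, so $\|\delta_T\|=2\,\inf\{\|T+\mu I\|:\mu\in\mathbb{C}\}$, and by the definition of $\varepsilon$-parallelism applied to $T_1=T$, $T_2=I$, we get $T\parallel^{\varepsilon}I$ if and only if $\inf\{\|T+\mu I\|:\mu\in\mathbb{C}\}\leq\varepsilon\|T\|$, i.e. if and only if $\|\delta_T\|\leq2\varepsilon\|T\|$. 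This yields (i)$\Leftrightarrow$(iv) and completes the proof. The only genuine subtlety is the appeal to Stampfli's formula for $\|\delta_T\|$; if one prefers to keep the argument self-contained one can instead derive it from Lemma \ref{le.16} together with the identity $\sup\{M_{T,I}(\xi)\}=\inf_\mu\|T+\mu I\|^2$, noting that $\|\delta_T\|^2=\sup_{\|\xi\|=1}\sup_{\|\eta\|=1,\,\eta\perp\xi}|(T\xi\mid\eta)|^2$-type reductions reduce to the same extremal quantity — but citing the standard result is cleanest.
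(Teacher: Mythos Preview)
Your proposal is correct and follows essentially the same route as the paper: both reduce everything to the identity $\inf_{\mu}\|T+\mu I\|^2=\sup_{\|\xi\|=1}\big(\|T\xi\|^2-|(T\xi\mid\xi)|^2\big)=\sup_{\|\xi\|=1}\|T\xi-(T\xi\mid\xi)\xi\|^2$ and then invoke the formula $\|\delta_T\|=2\inf_{\mu}\|T+\mu I\|$. The only cosmetic difference is that the paper cites Fujii--Nakamoto \cite{FU} for the first identity and Bhatia--\v{S}emrl \cite[Remark 3.2]{BE} for the second, whereas you obtain the first from Lemma~\ref{le.16} plus a direct expansion and attribute the second to Stampfli; the content is the same.
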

\begin{proof}
Fujii and Nakamoto \cite{FU} proved that
\begin{align*}
\left(\sup\{\|T\xi\|^2-|(T\xi\mid \xi)|^2:\, \|\xi\|=1\}\right)^\frac{1}{2}&=\inf\{\|T+\mu I\| :\, \mu\in\mathbb{C}\}
\\&=\sup\{\|T\xi-(T\xi\mid \xi)\xi\|:\, \|\xi\|=1\}.
\end{align*}
Thus the implications (i)$\Longrightarrow$(ii) and (ii)$\Longrightarrow$(iii) follow immediately from the above identities.\\
On the other hand by \cite[Remark 3.2]{BE} we have $$\sup\{\|TS-ST\|:\, \|S\|=1\}=2\inf\{\|T+\mu I\| :\, \mu\in\mathbb{C}\}.$$
Therefore we get $T\parallel^{\varepsilon} I$ if and only if $\|\delta_T\|=\sup\{\|TS-ST\|:\, \|S\|=1\}\leq2\varepsilon\|T\|$.
\end{proof}
Two operators $T_1, T_2\in \mathbb{B}(\mathscr{H})$ are unitarily equivalent if there exists a unitary operator $S$ such that $S^*T_1S=T_2.$ Clearly $\|T_1\|=\|T_2\|.$
\begin{proposition}\label{pr.121}
Let $T_1, T_2\in \mathbb{B}(\mathscr{H})$ be unitarily equivalent and $\varepsilon\in[0, 1)$. Then \\
(i) $T_1\parallel I\Longleftrightarrow T_2\parallel I$.\\
(ii) $T_1\parallel^{\varepsilon} I\Longleftrightarrow T_2\parallel^{\varepsilon} I$.
\end{proposition}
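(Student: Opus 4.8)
The plan is to exploit the elementary fact that conjugation by a unitary operator is an isometric $*$-automorphism of $\mathbb{B}(\mathscr{H})$ which fixes the identity. Fix a unitary $S$ with $S^*T_1S=T_2$, so that in particular $\|T_1\|=\|T_2\|$. The key observation I would record first is that for every scalar $\mu\in\mathbb{C}$ one has $S^*(T_1+\mu I)S=S^*T_1S+\mu S^*S=T_2+\mu I$, and since $A\mapsto S^*AS$ preserves the operator norm we get $\|T_1+\mu I\|=\|T_2+\mu I\|$ for all $\mu\in\mathbb{C}$. Thus unitary conjugation does not merely permute the family $\{T_i+\mu I\}_{\mu}$ but matches it translate-by-translate. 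I would also note that unitary equivalence is a symmetric relation: from $S^*T_1S=T_2$ we get $(S^*)^*T_2S^*=ST_2S^*=T_1$ with $S^*$ unitary, so it suffices to prove each implication in one direction.

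For (i), suppose $T_1\parallel I$, i.e.\ $\|T_1+\lambda I\|=\|T_1\|+1$ for some $\lambda\in\mathbb{T}$. Applying the displayed norm identity with $\mu=\lambda$ gives $\|T_2+\lambda I\|=\|T_1+\lambda I\|=\|T_1\|+1=\|T_2\|+1$, so $T_2\parallel I$; the reverse implication follows by the symmetry just noted, swapping the roles of $T_1$ and $T_2$. For (ii), I would simply take the infimum over $\mu\in\mathbb{C}$ in $\|T_1+\mu I\|=\|T_2+\mu I\|$ to obtain $\inf\{\|T_1+\mu I\|:\mu\in\mathbb{C}\}=\inf\{\|T_2+\mu I\|:\mu\in\mathbb{C}\}$, and then combine this with $\|T_1\|=\|T_2\|$: the defining inequality $\inf\{\|T_1+\mu I\|:\mu\in\mathbb{C}\}\le\varepsilon\|T_1\|$ holds precisely when $\inf\{\|T_2+\mu I\|:\mu\in\mathbb{C}\}\le\varepsilon\|T_2\|$, which is $T_1\parallel^{\varepsilon}I\Leftrightarrow T_2\parallel^{\varepsilon}I$.

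There is essentially no hard step here; the only point that needs care is making explicit that unitary conjugation is norm-preserving and sends $I$ to $I$, together with the symmetry of unitary equivalence. As an alternative route one could instead invoke the spectral characterizations in Theorem \ref{th.13} and Corollary \ref{co.120}, since $\sigma(T^*I)=\sigma(T)$, the spectral radius, and the inner-derivation norm $\|\delta_T\|$ are all unitary invariants; but the direct computation above is shorter and self-contained.
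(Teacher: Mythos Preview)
Your proof is correct and follows essentially the same approach as the paper: both arguments rest on the observation that $S^*(T_1+\mu I)S=T_2+\mu I$ and that conjugation by a unitary is norm-preserving, so $\|T_1+\mu I\|=\|T_2+\mu I\|$ for every scalar $\mu$, from which (i) and (ii) follow immediately. The paper's version is slightly more terse (it simply writes the chain of equivalences and says part (ii) follows by the same reasoning), but the content is the same.
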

\begin{proof}
(i) Since $T_1, T_2\in \mathbb{B}(\mathscr{H})$ are unitarily equivalent, there exists a unitary operator $S$ such that $S^*T_1S=T_2.$ Then\\
\begin{align*}
T_1\parallel I&\Longleftrightarrow \|T_1+\lambda I\|=\|T_1\|+\|I\|\,\,\,\,\,\,\,\,\,\, for\,\, some\,\, \lambda\in\mathbb{T}
\\&\Longleftrightarrow \|S^*(T_1+\lambda I)S\|=\|S^*T_1S\|+\|S^*IS\|\,\,\,\,\,\,\,\,\,\, for\,\, some\,\, \lambda\in\mathbb{T}
\\&\Longleftrightarrow \|T_2+\lambda I\|=\|T_2\|+\|I\|\,\,\,\,\,\,\,\,\,\, for\,\, some\,\, \lambda\in\mathbb{T}
\\&\Longleftrightarrow T_2\parallel I.
\end{align*}
(ii) It can be proved by the same reasoning as in the proof of (i).
\end{proof}
We finish this section with an application of the concept $\varepsilon$-parallelism to some special types of elementary operators. We state some prerequisites for the next result. Let $\mathscr{V}$ be a normed space and $\mathbb{B}(\mathscr{V})$ denotes the algebra of the bounded linear operators on $\mathscr{V}$. A standard operator algebra $\mathfrak{B}$ is a subalgebra of $\mathbb{B}(\mathscr{V})$ that contains all finite rank operators on $\mathscr{V}$. For $T_1, T_2\in\mathfrak{B}$ we denote $M_{T_1, T_2}$, $V_{T_1, T_2}$ and $U_{T_1, T_2}$ on $\mathfrak{B}$ by $M_{T_1, T_2}(S)=T_1ST_2$, $V_{T_1, T_2}=M_{T_1,T_2}-M_{T_2, T_1}$ and $U_{T_1, T_2}=M_{T_1, T_2}+M_{T_2, T_1}$, for every $S\in\mathfrak{B}.$ We denote by $d(U_{T_1, T_2})$ the supremum of the norm of $U_{T_1, T_2}(S)$ over all rank one operators of norm one on $\mathscr{V}$. Similarly $d(M_{T_1, T_2})$ and $d(V_{T_1, T_2})$ are defined. It is easy to see that $d(M_{T_1, T_2})=\|M_{T_1, T_2}\|=\|T_1\|\,\|T_2\|$ and $V_{T_1+\mu T_2, T_2}=V_{T_1, T_2}$ for all scalar $\mu$. To establish the following proposition we use some ideas of \cite[Theorem 11]{se.2}.

\begin{proposition}\label{pr.122}
Let $\mathfrak{B}$ be a standard operator algebra and $T_1, T_2\in\mathfrak{B}$. Then the estimate $d(U_{T_1, T_2})\geq2(1-\varepsilon)\|T_1\|\,\|T_2\|$ holds if one of the following properties is satisfied:\\
(i) $T_1\parallel^{\varepsilon} T_2;$\\
(ii) $T_2\parallel^{\varepsilon} T_1.$
\end{proposition}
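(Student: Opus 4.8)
The plan is to exploit the characterization of $\varepsilon$-parallelism given in Theorem \ref{th.16}(iv) together with the fact that $d(U_{T_1,T_2})$ is a supremum over rank one operators of norm one, where the action of $U_{T_1,T_2}$ on a rank one operator $\eta\otimes\xi$ can be computed explicitly. Since $(M_{T_1,T_2})(\eta\otimes\xi) = T_1(\eta\otimes\xi)T_2$, a direct computation with the definition $(\eta\otimes\xi)(\zeta)=(\zeta\mid\xi)\eta$ gives $T_1(\eta\otimes\xi)T_2 = (T_1\eta)\otimes(T_2^*\xi)$, so that
\begin{align*}
U_{T_1,T_2}(\eta\otimes\xi) = (T_1\eta)\otimes(T_2^*\xi) + (T_2\eta)\otimes(T_1^*\xi).
\end{align*}
First I would reduce to the case $\|T_1\|=\|T_2\|=1$ by homogeneity (both sides of the asserted inequality scale the same way, and $\varepsilon$-parallelism is preserved under nonzero scalar multiples by Theorem \ref{th.16}(iii)). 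I would also reduce, by symmetry of the roles of $T_1$ and $T_2$ in $U_{T_1,T_2}$, to treating only hypothesis (i), namely $T_1\parallel^{\varepsilon}T_2$.

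Next, the key step: by Theorem \ref{th.16}(i)$\Leftrightarrow$(iv), $T_1\parallel^{\varepsilon}T_2$ means $\sup\{|(T_1\xi\mid\eta)| : \|\xi\|=\|\eta\|=1,\ (T_2\xi\mid\eta)=0\}\le\varepsilon$. The complementary estimate I want is a \emph{lower} bound, so I would instead choose, for a fixed unit vector $\xi$ at which $\|T_1\xi\|$ is nearly maximal (i.e.\ close to $\|T_1\|=1$), the unit vector $\eta$ in the direction of $T_1\xi$, namely $\eta = T_1\xi/\|T_1\xi\|$. Then $(T_1\xi\mid\eta)=\|T_1\xi\|\approx 1$. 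Testing $U_{T_1,T_2}$ against the rank one operator $\xi\otimes\eta$ (note: here the ``inner'' vectors need a careful bookkeeping of which slot is which so that evaluating at $\eta$ is what produces the relevant pairing), one computes the norm of $U_{T_1,T_2}(\xi\otimes\eta)$ by applying it to the unit vector $\xi$ and pairing against $\eta$, which yields a quantity of the form $|(T_1\xi\mid\eta)\,(\text{something}) + (T_2\xi\mid\eta)\,(\text{something})|$; the first term has modulus near $1$ and, crucially, the Birkhoff--James/$\varepsilon$-parallelism hypothesis controls the situation so that the contribution near the maximizing direction cannot be cancelled by a factor smaller than $2(1-\varepsilon)$ overall. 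More precisely I would pick $\eta$ and a phase so that the two summands align, using that $\|T_1\xi\|\to 1$ forces, via the hypothesis applied to vectors orthogonal to $\eta$, that $T_2\xi$ has a large component along $\eta$ as well --- this is exactly the content of Theorem \ref{th.16}(v), $|(T_1\xi\mid T_2\xi)|^2\ge\|T_1\xi\|^2\|T_2\xi\|^2-\varepsilon^2\|T_1\|^2\|T_2\|^2$, which I would invoke to guarantee that $T_1\xi$ and $T_2\xi$ are approximately proportional.

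Concretely, I would run the argument with a sequence $\{\xi_n\}$ of unit vectors with $\|T_1\xi_n\|\to 1$, set $\eta_n = T_1\xi_n/\|T_1\xi_n\|$, and estimate $\|U_{T_1,T_2}(\xi_n\otimes\eta_n)\|$ from below by $|(U_{T_1,T_2}(\xi_n\otimes\eta_n)\xi_n\mid\eta_n)|$ after inserting an appropriate unimodular scalar (replacing $T_2$ by $\lambda T_2$ using Proposition \ref{pr.11}/Theorem \ref{th.16}(iii) to make the cross terms add constructively); using $(T_1\xi_n\mid\eta_n)=\|T_1\xi_n\|\to 1$ and, from Theorem \ref{th.16}(v), $|(T_1\xi_n\mid T_2\xi_n)|\ge\|T_1\xi_n\|\|T_2\xi_n\|\sqrt{1-\varepsilon^2/(\|T_1\xi_n\|^2\|T_2\xi_n\|^2)}$ whenever $T_2\xi_n\neq 0$, one obtains $\|U_{T_1,T_2}(\xi_n\otimes\eta_n)\|\ge \|T_1\xi_n\| + |(T_2\xi_n\mid\eta_n)|$ with $|(T_2\xi_n\mid\eta_n)|\ge \sqrt{1-\varepsilon^2}\,$ (in the limit), giving $d(U_{T_1,T_2})\ge 1+\sqrt{1-\varepsilon^2}\ge 2(1-\varepsilon)$ since $\sqrt{1-\varepsilon^2}\ge 1-\varepsilon$ on $[0,1)$. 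Taking $n\to\infty$ and undoing the normalization completes the proof.

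\textbf{Main obstacle.} The delicate point is the bookkeeping showing that the rank one test operator can be chosen so that the two summands $(T_1\eta)\otimes(T_2^*\xi)$ and $(T_2\eta)\otimes(T_1^*\xi)$ in $U_{T_1,T_2}(\eta\otimes\xi)$ reinforce rather than partially cancel; this requires simultaneously choosing the direction $\xi$ (to nearly maximize $\|T_1\xi\|$), the direction $\eta$, and a unimodular adjustment of $T_2$, and then verifying via Theorem \ref{th.16}(v) that $T_1\xi$ and $T_2\xi$ really are almost parallel as vectors so that a single phase aligns both terms. Handling the degenerate case $T_2\xi_n=0$ (or $T_2=0$) separately is routine but must be mentioned.
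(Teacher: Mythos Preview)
Your approach has a genuine gap. After normalizing to $\|T_1\|=\|T_2\|=1$ and picking unit vectors $\xi_n$ with $\|T_1\xi_n\|\to 1$, you set $\eta_n=T_1\xi_n/\|T_1\xi_n\|$ and assert that $|(T_2\xi_n\mid\eta_n)|\to\sqrt{1-\varepsilon^2}$ via Theorem~\ref{th.16}(v). But that inequality reads $|(T_1\xi_n\mid T_2\xi_n)|^2\ge\|T_1\xi_n\|^2\|T_2\xi_n\|^2-\varepsilon^2$, and nothing in the hypothesis forces $\|T_2\xi_n\|$ to be close to $1$: choosing $\mu$ with $\|T_1+\mu T_2\|$ near $\varepsilon$ and bounding $|\mu|\le 1+\varepsilon$ only gives $\|T_2\xi_n\|\gtrsim(1-\varepsilon)/(1+\varepsilon)$, which for (say) $\varepsilon=\tfrac12$ already makes the right-hand side of (v) negative, hence vacuous. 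Moreover, the quantity you test with, $|(U_{T_1,T_2}(\xi_n\otimes\eta_n)\xi_n\mid\eta_n)|$, computes to $2\|T_1\xi_n\|\,|(T_2\xi_n\mid\eta_n)|$, not $\|T_1\xi_n\|+|(T_2\xi_n\mid\eta_n)|$; carrying the honest estimates through yields only $d(U_{T_1,T_2})\ge 2(1-\varepsilon)/(1+\varepsilon)$, strictly weaker than the required $2(1-\varepsilon)$ for every $\varepsilon>0$. So the alignment problem you flagged as the main obstacle is not resolved by your choices.

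The paper's argument bypasses all vector-level analysis. It uses the two algebraic facts recorded just before the proposition: the invariance $V_{T_1+\mu T_2,\,T_2}=V_{T_1,T_2}$ and the identity $U_{T_1,T_2}=2M_{T_1,T_2}-V_{T_1,T_2}$. From the first, $\|V_{T_1,T_2}\|\le 2\|T_2\|\,\|T_1+\mu T_2\|$ for every $\mu$, so taking the infimum over $\mu$ and using the \emph{definition} of $T_1\parallel^\varepsilon T_2$ gives $d(V_{T_1,T_2})\le\|V_{T_1,T_2}\|\le 2\varepsilon\|T_1\|\,\|T_2\|$. From the second, together with $d(M_{T_1,T_2})=\|T_1\|\,\|T_2\|$, the reverse triangle inequality on each rank-one test operator yields $d(U_{T_1,T_2})\ge 2d(M_{T_1,T_2})-d(V_{T_1,T_2})\ge 2(1-\varepsilon)\|T_1\|\,\|T_2\|$. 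No appeal to Theorem~\ref{th.16} is needed, and the issue of aligning the two summands on a single rank-one operator never arises.
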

\begin{proof}
Let $T_1\parallel^{\varepsilon} T_2$. Hence $\inf\{\|T_1+\mu T_2\| :\, \mu\in\mathbb{C}\}\leq\varepsilon\|T_1\|.$ For every $\mu\in\mathbb{C}$ we have
\begin{align*}
\|V_{T_1, T_2}\|=\|V_{T_1+\mu T_2, T_2}\|&=\|M_{T_1+\mu T_2, T_2}-M_{T_2, T_1+\mu T_2}\|
\\&\leq\|M_{T_1+\mu T_2, T_2}\|+\|M_{T_2, T_1+\mu T_2}\|
=2\|T_2\|\,\|T_1+\mu T_2\|.
\end{align*}
Hence $$\|V_{T_1, T_2}\|\leq2\|T_2\|\inf\{\|T_1+\mu T_2\| :\, \mu\in\mathbb{C}\}\leq2\varepsilon\|T_1\|\,\|T_2\|,$$
from which we get $$d(V_{T_1, T_2})\leq2\varepsilon\|T_1\|\,\|T_2\|.$$
It follows from $U_{T_1, T_2}=2M_{T_1, T_2}-V_{T_1, T_2}$ that $$d(U_{T_1, T_2})\geq2d(M_{T_1, T_2})-d(V_{T_1, T_2})\geq2\|T_1\|\,\|T_2\|-2\varepsilon\|T_1\|\,\|T_2\|=2(1-\varepsilon)\|T_1\|\,\|T_2\|.$$
By the same argument, the estimation follows under the condition (ii).
\end{proof}
\section{Parallelism in $C^*$-algebras and inner product $C^*$-modules}
The relations between parallel elements in Hilbert $C^*$-modules form the main topic of this section.
We describe the concept parallelism in Hilbert $C^*$-modules. The notion of state plays an important role in this investigation. We begin with the following proposition, which will be useful
in other contexts as well. In this theorem we establish some equivalent assertions about the parallelism of elements of a Hilbert $C^*$-module. The proofs of implication (i)$\Rightarrow$(ii) in Theorem \ref{th.21} and Corollary \ref{co.22} are modification of ones given by Aramba\v{s}i\'{c} and Raji\'{c} \cite[Theorem 2.1, 2.9]{ar.1}. We present the proof for the sake of completeness.

\begin{theorem}\label{th.21}
Let $\mathscr{X}$ be a Hilbert $C^*$-module over a $C^*$-algebra ${\mathscr A}$. For $x , y\in \mathscr{X}$ the following statements are equivalent:\\
(i) $x\parallel y$;\\
(ii) There exist a state $\varphi$ over $\mathscr A$ and $\lambda\in\mathbb{T}$ such that $\varphi({\langle x, y\rangle})=\lambda\|x\|\|y\|;$\\
(iii) There exist a norm one linear functional $f$ over $\mathscr{X}$ and $\lambda\in\mathbb{T}$ such that $f(x) = \|x\|$ and $f(y) = \lambda\|y\|.$
\end{theorem}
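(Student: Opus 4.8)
The plan is to prove the cycle (i)$\Rightarrow$(ii)$\Rightarrow$(iii)$\Rightarrow$(i). The implication (iii)$\Rightarrow$(i) should be the cheapest: given a norm one linear functional $f$ on $\mathscr{X}$ with $f(x)=\|x\|$ and $f(y)=\lambda\|y\|$, one computes
\begin{align*}
\|x+\bar\lambda y\| \geq |f(x+\bar\lambda y)| = |f(x)+\bar\lambda f(y)| = \|x\|+\|y\| \geq \|x+\bar\lambda y\|,
\end{align*}
so $\|x+\bar\lambda y\|=\|x\|+\|y\|$ with $\bar\lambda\in\mathbb{T}$, i.e.\ $x\parallel y$. (The last inequality is just the triangle inequality, and $\|f\|=1$ gives the first.)

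For (i)$\Rightarrow$(ii): assume $\|x+\lambda y\|=\|x\|+\|y\|$ for some $\lambda\in\mathbb{T}$. The key tool is \cite[Theorem 3.3.6]{mor}, exactly as used in Section 3: there is a state $\varphi$ over $\mathscr A$ with $\varphi(\langle x+\lambda y, x+\lambda y\rangle)=\|\langle x+\lambda y, x+\lambda y\rangle\|=\|x+\lambda y\|^2=(\|x\|+\|y\|)^2$. Expanding the inner product and using that $\varphi$ is a positive functional bounded by the norm, together with the Cauchy--Schwarz inequality for states $|\varphi(\langle x, y\rangle)|^2\leq\varphi(\langle x, x\rangle)\varphi(\langle y, y\rangle)\leq\|x\|^2\|y\|^2$, one gets
\begin{align*}
(\|x\|+\|y\|)^2 &= \varphi(\langle x, x\rangle) + \lambda\varphi(\langle x, y\rangle) + \bar\lambda\varphi(\langle y, x\rangle) + \varphi(\langle y, y\rangle)\\
&\leq \|x\|^2 + 2|\varphi(\langle x, y\rangle)| + \|y\|^2 \leq \|x\|^2 + 2\|x\|\|y\| + \|y\|^2 = (\|x\|+\|y\|)^2,
\end{align*}
forcing $|\varphi(\langle x, y\rangle)| = \|x\|\|y\|$, and moreover $\lambda\varphi(\langle x, y\rangle)+\bar\lambda\varphi(\langle y, x\rangle)=2\|x\|\|y\|$. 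Writing $\varphi(\langle x, y\rangle)=\mu\|x\|\|y\|$ for some $\mu\in\mathbb{T}$, the equality $\lambda\mu+\overline{\lambda\mu}=2$ forces $\lambda\mu=1$, so $\mu=\bar\lambda\in\mathbb{T}$ and $\varphi(\langle x, y\rangle)=\bar\lambda\|x\|\|y\|$, which is (ii) (with $\bar\lambda$ in place of $\lambda$).

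For (ii)$\Rightarrow$(iii): given a state $\varphi$ and $\lambda\in\mathbb{T}$ with $\varphi(\langle x, y\rangle)=\lambda\|x\|\|y\|$, the natural candidate is $f(z):=\overline{\mu}\,\varphi(\langle w, z\rangle)$ for a suitable fixed $w\in\mathscr{X}$ and scalar $\mu\in\mathbb{T}$ — one wants $f$ linear, $\|f\|\leq 1$, $f(x)=\|x\|$, $f(y)=\lambda\|y\|$. Taking $w=x$ and $f(z):=\varphi(\langle x, z\rangle)/\|x\|$ (assuming $x\neq 0$; the case $x=0$ is trivial since then $x\parallel y$ always and one takes $f$ supporting $y$), linearity is immediate from property (i) of the inner product, and $|f(z)|=|\varphi(\langle x, z\rangle)|/\|x\|\leq \varphi(\langle x, x\rangle)^{1/2}\varphi(\langle z, z\rangle)^{1/2}/\|x\| \leq \|z\|$ by Cauchy--Schwarz for states, so $\|f\|\leq 1$; also $f(x)=\varphi(\langle x, x\rangle)/\|x\|$, which need not equal $\|x\|$ unless $\varphi$ attains the norm on $\langle x, x\rangle$. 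So one must first upgrade $\varphi$: since $|\varphi(\langle x, y\rangle)|=\|x\|\|y\|$ and Cauchy--Schwarz gives $|\varphi(\langle x, y\rangle)|^2\leq \varphi(\langle x, x\rangle)\varphi(\langle y, y\rangle)\leq\|x\|^2\|y\|^2$, equality throughout forces $\varphi(\langle x, x\rangle)=\|x\|^2$ and $\varphi(\langle y, y\rangle)=\|y\|^2$ automatically. Then $f(x)=\|x\|$, and $|f(y)|=|\varphi(\langle x, y\rangle)|/\|x\|=\|y\|$ with $f(y)=\lambda\|y\|$ by hypothesis. This gives (iii).

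The main obstacle is the bookkeeping of the unimodular constants $\lambda$ across the three equivalent formulations (the $\lambda$ in (i) is the conjugate of the one in (ii) and (iii)); stating the theorem with a fixed $\lambda\in\mathbb{T}$ in each clause requires care that the same $\lambda$ (or a consistently related one) works, and one should either track conjugates explicitly or note that $\mathbb{T}$ is closed under conjugation so the phrasing ``there exist ... and $\lambda\in\mathbb{T}$'' is unaffected. A secondary point is handling the degenerate cases $x=0$ or $y=0$ separately, where the functionals and states are chosen to support whichever vector is nonzero (or arbitrarily if both vanish).
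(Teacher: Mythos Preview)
Your proof is correct and follows essentially the same cycle (i)$\Rightarrow$(ii)$\Rightarrow$(iii)$\Rightarrow$(i) as the paper, with the same tools: the existence of a state attaining the norm on the positive element $\langle x+\lambda y, x+\lambda y\rangle$ for (i)$\Rightarrow$(ii), the functional $f(z)=\varphi(\langle x, z\rangle)/\|x\|$ together with Cauchy--Schwarz for (ii)$\Rightarrow$(iii), and the direct estimate via $f$ for (iii)$\Rightarrow$(i). The only cosmetic difference is that the paper starts (i)$\Rightarrow$(ii) by writing the parallelism condition as $\|x+\overline{\lambda}y\|=\|x\|+\|y\|$ so that the resulting $\lambda$ matches the one in (ii) without the conjugation you track at the end; your handling of the degenerate cases $x=0$ or $y=0$ is actually more explicit than the paper's.
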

\begin{proof}
(i)$\Rightarrow$(ii) Let $x\parallel y$. Hence $\|x+\overline{\lambda} y\|=\|x\|+\|y\|$ for some $\lambda\in\mathbb{T}$. By \cite[Theorem 3.3.6]{mor} there is a state $\varphi$ over $\mathscr A$ such that $$\varphi({\langle x+\overline{\lambda} y, x+\overline{\lambda} y\rangle})=\|{\langle x+\overline{\lambda} y, x+\overline{\lambda} y\rangle}\|=\|x+\overline{\lambda} y\|^2.$$ We therefore have
\begin{align*}
\|x+\overline{\lambda} y\|^2=&\varphi({\langle x+\overline{\lambda} y, x+\overline{\lambda} y\rangle})
\\&=\varphi({\langle x, x\rangle})+\varphi({\langle x, \overline{\lambda} y\rangle})+\varphi({\langle \overline{\lambda} y, x\rangle})+|\lambda|^2\varphi({\langle y, y\rangle})
\\&=\varphi({\langle x, x\rangle})+2\mbox{Re}\varphi({\langle x, \overline{\lambda} y\rangle})+\varphi({\langle y, y\rangle})
\\&\leq\|x\|^2+2\|{\langle x, \overline{\lambda} y\rangle}\|+\|y\|^2
\\&\leq\|x\|^2+2\|x\|\|y\|+\|y\|^2
\\&=(\|x\|+\|y\|)^2=\|x+\overline{\lambda} y\|^2.
\end{align*}
Thus we get $\varphi({\langle x, x\rangle})=\|x\|^2$, $\varphi({\langle y, y\rangle})=\|y\|^2$ and $\varphi({\langle x, y\rangle})=\lambda\|x\|\|y\|.$\\
(ii)$\Rightarrow$(iii) Suppose that there exist a state $\varphi$ over $\mathscr A$ and $\lambda\in\mathbb{T}$ such that $\varphi({\langle x, y\rangle})=\lambda\|x\|\|y\|.$ We may assume that $x\neq0$. Define a linear functional $f$ on $\mathscr{X}$ by $$f(z)=\frac{\varphi({\langle x, z\rangle})}{\|x\|}\,\,\,\,\,\,\,\,\,\,(z\in E).$$
It follows from $$|f(z)|=\left|\frac{\varphi({\langle x, z\rangle})}{\|x\|}\right|\leq\frac{\|{\langle x, z\rangle}\|}{\|x\|}\leq\|z\|,$$ that $\|f\|\leq1$.
We infer from the Cauchy--Schwarz inequality that
$$\|x\|^2\|y\|^2=|\varphi({\langle x, y\rangle})|^2\leq\varphi({\langle x, x\rangle})\varphi({\langle y, y\rangle})\leq\|x\|^2\|y\|^2,$$
so $\varphi({\langle x, x\rangle})=\|x\|^2$ and hence $f(x)=\frac{\varphi({\langle x, x\rangle})}{\|x\|}=\frac{\|x\|^2}{\|x\|}=\|x\|$. Thus $\|f\|=1$ and $f(y)=\frac{\varphi({\langle x, y\rangle})}{\|x\|}=\frac{\lambda\|x\|\|y\|}{\|x\|}=\lambda\|y\|.$\\
(iii)$\Rightarrow$(i) Suppose that there exist a norm one linear functional $f$ over $\mathscr{X}$ and $\lambda\in\mathbb{T}$ such that $f(x) =\|x\|$ and $f(y) = \lambda\|y\|$. Hence
$$\|x\|+\|y\|=f(x)+f(\overline{\lambda}y)=f(x+\overline{\lambda}y)\leq\|x+\overline{\lambda}y\|\leq\|x\|+
\|\overline{\lambda}y\|=\|x\|+\|y\|.$$
So, we have $\|x+\overline{\lambda}y\|=\|x\|+\|y\|$ for $\overline{\lambda}\in\mathbb{T}$. Thus $x\parallel y$.
\end{proof}

\begin{corollary} \label{co.22}
Let $\mathscr{X}$ be a Hilbert ${\mathscr A}$-module and $x , y\in \mathscr{X}\smallsetminus\{0\}$.\\
(i) If $x\parallel y$, then there exist a state $\varphi$ over $\mathscr A$ and $\lambda\in\mathbb{T}$ such that $$\frac{\|y\|}{\|x\|}\varphi(|x|^2)+\frac{\|x\|}{\|y\|}\varphi(|y|^2)=2\lambda\varphi({\langle x, y\rangle}).$$
(ii) Let ${\mathscr A}$ has an identity $e$. If either $|x|^2=e$ or $|y|^2=e$ and there exist a state $\varphi$ over $\mathscr A$ and $\lambda\in\mathbb{T}$ such that $\frac{\|y\|}{\|x\|}\varphi(|x|^2)+\frac{\|x\|}{\|y\|}\varphi(|y|^2)=2\lambda\varphi({\langle x, y\rangle})$, then $x\parallel y$.
\end{corollary}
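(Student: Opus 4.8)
The plan is to read both statements off Theorem \ref{th.21}, whose content is that $x\parallel y$ is detected by a state $\varphi$ on $\mathscr A$ and a $\lambda\in\mathbb{T}$ with $\varphi(\langle x,y\rangle)=\lambda\|x\|\,\|y\|$. Two auxiliary facts will be used throughout: for \emph{any} state $\varphi$, since $\|\varphi\|=1$ one has $\varphi(|x|^2)=\varphi(\langle x,x\rangle)\le\|x\|^2$ and $\varphi(|y|^2)\le\|y\|^2$, while the Cauchy--Schwarz inequality for states gives $|\varphi(\langle x,y\rangle)|^2\le\varphi(|x|^2)\,\varphi(|y|^2)$; in particular, whenever $|\varphi(\langle x,y\rangle)|=\|x\|\,\|y\|$ all three of these are equalities, i.e.\ $\varphi(|x|^2)=\|x\|^2$ and $\varphi(|y|^2)=\|y\|^2$ (this is exactly what is extracted inside the proof of Theorem \ref{th.21}).

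For (i): apply Theorem \ref{th.21} to obtain $\varphi$ and $\lambda\in\mathbb{T}$ with $\varphi(\langle x,y\rangle)=\lambda\|x\|\,\|y\|$, hence also $\varphi(|x|^2)=\|x\|^2$ and $\varphi(|y|^2)=\|y\|^2$. Then
$$\frac{\|y\|}{\|x\|}\varphi(|x|^2)+\frac{\|x\|}{\|y\|}\varphi(|y|^2)=2\|x\|\,\|y\|=2\overline{\lambda}\,\varphi(\langle x,y\rangle),$$
and since $\overline{\lambda}\in\mathbb{T}$ this is the asserted identity; this part is pure bookkeeping.

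For (ii): first note the hypothesis is symmetric in $x,y$ — the left-hand side visibly is, and, being a nonnegative real, it forces $\lambda\varphi(\langle x,y\rangle)$ to be real, whence $2\lambda\varphi(\langle x,y\rangle)=2\overline{\lambda}\,\overline{\varphi(\langle x,y\rangle)}=2\overline{\lambda}\,\varphi(\langle y,x\rangle)$ — so, $\parallel$ being symmetric, we may assume $|x|^2=e$. Then $\|x\|=1$ and, $\mathscr A$ being unital, $\varphi(|x|^2)=\varphi(e)=1$, so the hypothesis becomes $\|y\|+\frac{1}{\|y\|}\varphi(|y|^2)=2\lambda\varphi(\langle x,y\rangle)$. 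The left-hand side is a strictly positive real (as $|y|^2\ge0$, $\varphi\ge0$, $y\ne0$), so $\lambda\varphi(\langle x,y\rangle)$ equals its modulus $b:=|\varphi(\langle x,y\rangle)|$. Putting $q:=\varphi(|y|^2)$, Cauchy--Schwarz for states gives $b^2\le\varphi(|x|^2)\,q=q$, and the AM--GM inequality gives $\|y\|+\frac{q}{\|y\|}\ge2\sqrt{q}$, so
$$2\sqrt{q}\le\|y\|+\frac{q}{\|y\|}=2b\le2\sqrt{q}.$$
Equality throughout forces $q=\|y\|^2$ and $b=\|y\|=\|x\|\,\|y\|$, i.e.\ $\varphi(\langle x,y\rangle)=\overline{\lambda}\,\|x\|\,\|y\|$ with $\overline{\lambda}\in\mathbb{T}$; Theorem \ref{th.21}, implication (ii)$\Rightarrow$(i), then yields $x\parallel y$.

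I expect the one genuinely delicate point to be in (ii): recognizing that the prescribed equation automatically makes its right-hand side a nonnegative real, which both licenses replacing $\lambda\varphi(\langle x,y\rangle)$ by $|\varphi(\langle x,y\rangle)|$ and validates the symmetry reduction, and then squeezing with the AM--GM/Cauchy--Schwarz pincer. It should also be stressed that unitality together with $|x|^2=e$ (or $|y|^2=e$) is used precisely to force $\varphi(|x|^2)=\|x\|^2$; for a general state $\varphi$ the Cauchy--Schwarz step need not be tight, and the converse direction in (ii) genuinely needs this hypothesis.
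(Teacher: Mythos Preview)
Your proof is correct and follows essentially the same route as the paper's: both parts are read off Theorem~\ref{th.21} together with the Cauchy--Schwarz inequality for states and an AM--GM squeeze. The paper merely packages your AM--GM step as the expansion of $\bigl(\sqrt{\|y\|}-\sqrt{\varphi(|y|^2)/\|y\|}\,\bigr)^2\ge0$ and is terser about the symmetry reduction in (ii), but the substance is identical.
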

\begin{proof}
(i) Let $x\parallel y$. As in the proof of Theorem \ref{th.21}, there exist a state $\varphi$ over $\mathscr A$ and $\lambda\in\mathbb{T}$ such that $\varphi(|x|^2)=\varphi({\langle x, x\rangle})=\|x\|^2$, $\varphi(|y|^2)=\varphi({\langle y, y\rangle})=\|y\|^2$ and $\varphi({\langle x, \lambda y\rangle})=\|x\|\|y\|.$ Thus
$$\frac{\|y\|}{\|x\|}\varphi(|x|^2)+\frac{\|x\|}{\|y\|}\varphi(|y|^2)=\frac{\|y\|}{\|x\|}\cdot\|x\|^2+\frac{\|x\|}{\|y\|}\|y\|^2=2\|x\|\|y\|=2\lambda\varphi({\langle x, y\rangle}).$$
(ii) We may assume that $|x|^2=e$. We have
\begin{align*}
0\leq\left(\sqrt{\|y\|}-\sqrt{\frac{\varphi(|y|^2)}{\|y\|}}\right)^2&
=\left(\sqrt{\frac{\|y\|}{\|x\|}\varphi(|x|^2)}-\sqrt{\frac{\|x\|}{\|y\|}\varphi(|y|^2)}\right)^2
\\&=\frac{\|y\|}{\|x\|}\varphi(|x|^2)+\frac{\|x\|}{\|y\|}\varphi(|y|^2)-2\sqrt{\varphi(|x|^2)\varphi(|y|^2)}
\\&=\frac{\|y\|}{\|x\|}\varphi(|x|^2)+\frac{\|x\|}{\|y\|}\varphi(|y|^2)-2\sqrt{\varphi({\langle x, x\rangle})\varphi({\langle \lambda y, \lambda y\rangle})}
\\&\leq\frac{\|y\|}{\|x\|}\varphi(|x|^2)+\frac{\|x\|}{\|y\|}\varphi(|y|^2)-2\sqrt{|\varphi(\langle x, \lambda y\rangle)|^2}
\\&\hspace{2.7cm}(\mbox{by the Cauchy--Schwarz inequality})
\\&= \frac{\|y\|}{\|x\|}\varphi(|x|^2)+\frac{\|x\|}{\|y\|}\varphi(|y|^2)-| 2\lambda\varphi(\langle x, y\rangle)|
\\&= \frac{\|y\|}{\|x\|}\varphi(|x|^2)+\frac{\|x\|}{\|y\|}\varphi(|y|^2)-\left|\frac{\|y\|}{\|x\|}\varphi(|x|^2)+\frac{\|x\|}{\|y\|}\varphi(|y|^2)\right|
\\&=0.
\end{align*}
We conclude that $\varphi({\langle y, y\rangle})=\|y\|^2$ and $\varphi({\langle x, y\rangle})=\overline{\lambda}\sqrt{\varphi(|x|^2)\varphi(|y|^2)}=\overline{\lambda}\|x\|\|y\|$, since $2\lambda\varphi(\langle x, y\rangle)\geq 0$. Thus, by Theorem \ref{th.21} (ii), we get $x\parallel y$.
\end{proof}

\begin{corollary}\label{co.23}
Let $\mathscr{X}$ be a Hilbert ${\mathscr A}$-module and $x , y\in \mathscr{X}$. Then the following statements are equivalent:\\
(i) $x\parallel y$;\\
(ii) There exist a state $\varphi$ over $\mathbb{K}(\mathscr{X})$ and $\lambda\in\mathbb{T}$ such that $\varphi(\theta_{x, y})=\lambda\|x\|\|y\|.$
\end{corollary}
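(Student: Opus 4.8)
The plan is to transport the statement into the linking algebra $\mathbb{L}(\mathscr{X})$ and then apply Theorem \ref{th.21}. Recall from the introduction that $\mathscr{X}$ embeds isometrically into $\mathbb{L}(\mathscr{X})$ via $x\mapsto r_x\in\mathbb{K}(\mathscr A,\mathscr{X})$, that $\langle x,y\rangle$ of $\mathscr{X}$ corresponds to the product $l_x r_y=T_{\langle x,y\rangle}\in\mathbb{K}(\mathscr A)$, and that the compact operators $\theta_{x,y}\in\mathbb{K}(\mathscr{X})$ sit inside $\mathbb{L}(\mathscr{X})$ in the bottom-right corner. The key algebraic identities I would use are $\theta_{x,y}=r_x l_y$ (so $\theta_{x,y}$ is literally a product of the two embedded corner elements) and, dually, $l_y r_x = T_{\langle y,x\rangle}$. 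First I would observe that $x\parallel y$ in $\mathscr{X}$ if and only if $r_x\parallel r_y$ in the $C^*$-algebra $\mathbb{L}(\mathscr{X})$, since $x\mapsto r_x$ is an isometric embedding and $\|x+\lambda y\|=\|r_x+\lambda r_y\|$.

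Next, viewing $\mathbb{L}(\mathscr{X})$ as the ambient $C^*$-algebra and $\mathbb{K}(\mathscr{X})$ (in the bottom-right corner) as a $C^*$-subalgebra, I would apply Theorem \ref{th.21} — or rather the $C^*$-algebra case, which is the instance $\mathscr{X}=\mathscr A$ of that theorem applied to $\mathbb{L}(\mathscr{X})$ as a Hilbert module over itself. That gives: $r_x\parallel r_y$ iff there is a state $\psi$ on $\mathbb{L}(\mathscr{X})$ and $\lambda\in\mathbb{T}$ with $\psi(r_x^* r_y)=\lambda\|r_x\|\,\|r_y\|=\lambda\|x\|\,\|y\|$. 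Since $r_x^*=l_x$, we have $r_x^* r_y = l_x r_y = T_{\langle x,y\rangle}$, so the condition reads $\psi(T_{\langle x,y\rangle})=\lambda\|x\|\,\|y\|$. The remaining task is to convert between a state on $\mathbb{L}(\mathscr{X})$ evaluated on a corner and a state on $\mathbb{K}(\mathscr{X})$ evaluated on $\theta_{x,y}$. Here I would run a ``compression'' argument: given a state $\varphi$ on $\mathbb{K}(\mathscr{X})$, extend it (Hahn–Banach / positivity of states) to a state on $\mathbb{L}(\mathscr{X})$; conversely, given $\psi$ on $\mathbb{L}(\mathscr{X})$ one restricts to $\mathbb{K}(\mathscr{X})$, though one must check the restriction has norm one. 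To handle the norming issue cleanly I would instead set up a direct passage: use the elementary identity $\theta_{x,y}\theta_{y,x}=\|y\|^2\,\theta_{x,\hat x}$-type relations, or better, exploit that $\theta_{x,x}\ge 0$ with $\|\theta_{x,x}\|=\|x\|^2$, pick (via \cite[Theorem 3.3.6]{mor}) a state $\varphi$ on $\mathbb{K}(\mathscr{X})$ attaining the norm of $\theta_{z,z}$ for an appropriate combination $z=x+\overline\lambda y$, and expand $\varphi(\theta_{x+\overline\lambda y,\,x+\overline\lambda y})$ exactly as in the proof of Theorem \ref{th.21}(i)$\Rightarrow$(ii) to extract $\varphi(\theta_{x,x})=\|x\|^2$, $\varphi(\theta_{y,y})=\|y\|^2$, and $\varphi(\theta_{x,y})=\lambda\|x\|\,\|y\|$. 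For the reverse implication, given such a $\varphi$, I would define a norm-one functional on $\mathscr{X}$ by $z\mapsto \varphi(\theta_{x,z})/\|x\|$ and verify $f(x)=\|x\|$, $f(y)=\lambda\|y\|$, then invoke Theorem \ref{th.21}(iii)$\Rightarrow$(i); the bound $|\varphi(\theta_{x,z})|\le\|\theta_{x,z}\|\le\|x\|\,\|z\|$ gives $\|f\|\le 1$, and $\varphi(\theta_{x,x})=\|x\|^2$ (forced by Cauchy–Schwarz for $\varphi$) gives $\|f\|=1$.

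The main obstacle I anticipate is the norming/extension bookkeeping between $\mathbb{K}(\mathscr{X})$ and the larger algebra: a state on a $C^*$-subalgebra need not restrict from a state on the big algebra with control on which element it normalizes, so rather than appeal to abstract extension I expect the cleanest route is the self-contained one in the second half of the previous paragraph — mimic the proof of Theorem \ref{th.21} with $\mathscr{X}$ replaced by $\mathbb{K}(\mathscr{X},\mathscr A)\cong\mathscr{X}$ (via $x\mapsto l_x$), where the inner product $\langle l_x, l_y\rangle$ in this Hilbert $\mathbb{K}(\mathscr{X})$-module equals $l_x^* l_y = r_x l_y = \theta_{x,y}$. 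Thus Corollary \ref{co.23} is exactly Theorem \ref{th.21} applied to the Hilbert $\mathbb{K}(\mathscr{X})$-module $\mathbb{K}(\mathscr{X},\mathscr A)$ together with the isometric conjugate-linear identification $\mathscr{X}\cong\mathbb{K}(\mathscr{X},\mathscr A)$, $x\mapsto l_x$, under which $x\parallel y$ translates to $l_x\parallel l_y$ and $\langle l_x,l_y\rangle=\theta_{x,y}$. I would write the proof in that form, citing Theorem \ref{th.21} and the isometric-isomorphism facts recalled in Section 1, and spelling out only the identification $l_x^* l_y=\theta_{x,y}$ in detail.
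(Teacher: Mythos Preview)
Your proposal is correct, and the route you eventually settle on is essentially the paper's: the paper's one-line proof simply observes that $\mathscr{X}$ is a left Hilbert $\mathbb{K}(\mathscr{X})$-module via $[x,y]=\theta_{x,y}$ and invokes Theorem~\ref{th.21} directly, while you reach the same $\mathbb{K}(\mathscr{X})$-valued inner product through the linking-algebra identification $\mathscr{X}\cong\mathbb{K}(\mathscr{X},\mathscr A)$, $x\mapsto l_x$, with $\langle l_x,l_y\rangle=l_x^*l_y=\theta_{x,y}$. The detour through $\mathbb{L}(\mathscr{X})$ and the state-extension discussion are unnecessary once you see this; your final paragraph is exactly the argument the paper intends, just phrased via the conjugate-linear copy of $\mathscr{X}$ rather than the left-module structure on $\mathscr{X}$ itself.
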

\begin{proof}
Since $\mathscr{X}$ can be regarded as a left Hilbert $\mathbb{K}(\mathscr{X})$-module via the inner product $[x, y]=\theta_{x, y}$, therefore we reach the result by using Theorem \ref{th.21}.
\end{proof}
The following result characterizes the parallelism for elements of a $C^*$-algebra.
\begin{corollary}\label{co.25}
Let ${\mathscr A}$ be a $C^*$-algebra, and $a, b\in {\mathscr A}$. Then the following statements are equivalent:\\
(i) $a\parallel b$;\\
(ii) There exist a state $\varphi$ over $\mathscr A$ and $\lambda\in\mathbb{T}$, such that $\varphi(a^*b)=\lambda\|a\|\|b\|$;\\
(iii) There exist a Hilbert space $\mathscr{H}$, a representation $\pi:{\mathscr A}\to \mathbb{B}(\mathscr{H})$, a unit vector $\xi\in \mathscr{H}$ and $\lambda\in\mathbb{T}$ such that $\|\pi(a)\xi\|=\|a\|$ and $(\pi(a)\xi\mid \pi(b)\xi)=\lambda\|a\|\|b\|.$
\end{corollary}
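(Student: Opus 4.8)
The plan is to deduce all three equivalences directly from Theorem \ref{th.21} applied to the Hilbert $C^*$-module $\mathscr A$ over itself, together with the GNS construction. Recall that any $C^*$-algebra $\mathscr A$ is a Hilbert $\mathscr A$-module via $\langle a, b\rangle := a^*b$, and that with this inner product one has $\|a\| = \|\langle a,a\rangle\|^{1/2} = \|a^*a\|^{1/2}$, which is the usual norm. Hence the relation $a \parallel b$ in the sense of the normed space $\mathscr A$ coincides with the module parallelism of Theorem \ref{th.21}. The equivalence (i)$\Leftrightarrow$(ii) is then immediate: condition (ii) of Theorem \ref{th.21} reads ``there exist a state $\varphi$ over $\mathscr A$ and $\lambda \in \mathbb{T}$ with $\varphi(\langle a,b\rangle) = \lambda\|a\|\|b\|$'', and $\langle a,b\rangle = a^*b$, giving exactly (ii) here.

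Next I would establish (ii)$\Rightarrow$(iii). Given a state $\varphi$ on $\mathscr A$ with $\varphi(a^*b) = \lambda\|a\|\|b\|$, apply the GNS construction to obtain a Hilbert space $\mathscr H$, a cyclic representation $\pi \colon \mathscr A \to \mathbb B(\mathscr H)$, and a unit cyclic vector $\xi \in \mathscr H$ such that $\varphi(c) = (\pi(c)\xi \mid \xi)$ for all $c \in \mathscr A$. Then
\[
(\pi(a)\xi \mid \pi(b)\xi) = (\pi(b^*a)\xi \mid \xi) = \varphi(b^*a) = \overline{\varphi(a^*b)} = \overline{\lambda}\|a\|\|b\|,
\]
so replacing $\lambda$ by $\overline\lambda$ (still in $\mathbb T$) gives the second displayed equality in (iii). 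For the first, note that in the argument of Theorem \ref{th.21} (specifically the chain in (ii)$\Rightarrow$(iii), or directly from the Cauchy--Schwarz inequality $\|a\|^2\|b\|^2 = |\varphi(a^*b)|^2 \le \varphi(a^*a)\varphi(b^*b) \le \|a\|^2\|b\|^2$) one gets $\varphi(a^*a) = \|a\|^2$, hence $\|\pi(a)\xi\|^2 = (\pi(a^*a)\xi\mid\xi) = \varphi(a^*a) = \|a\|^2$, i.e. $\|\pi(a)\xi\| = \|a\|$.

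Finally I would prove (iii)$\Rightarrow$(ii): given $\pi$, $\xi$, $\lambda$ as in (iii), define $\varphi(c) = (\pi(c)\xi\mid\xi)$ for $c\in\mathscr A$. This is a state on $\mathscr A$ (it is positive since $\pi$ is a $*$-homomorphism, and $\varphi(1) = 1$ or, in the non-unital case, $\|\varphi\| = 1$ follows because $\varphi(a^*a) = \|\pi(a)\xi\|^2 = \|a\|^2 \le \|a^*a\|$ with equality, forcing norm one). Then $\varphi(a^*b) = (\pi(a^*b)\xi\mid\xi) = (\pi(b)\xi \mid \pi(a)\xi) = \overline{(\pi(a)\xi\mid\pi(b)\xi)} = \overline{\lambda}\|a\|\|b\|$, and again absorbing the conjugate into $\mathbb T$ yields (ii). The only mildly delicate point — the main obstacle, such as it is — is bookkeeping the complex conjugates of $\lambda$ consistently across the three conditions and checking that the GNS state has norm exactly one in the non-unital setting; both are routine once one is careful. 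I would then conclude by remarking that (i)$\Leftrightarrow$(ii) via Theorem \ref{th.21} and (ii)$\Leftrightarrow$(iii) via the above close the loop.
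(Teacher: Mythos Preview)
Your proposal is correct and follows essentially the same route as the paper: the equivalence (i)$\Leftrightarrow$(ii) is obtained by specializing Theorem~\ref{th.21} to $\mathscr A$ viewed as a Hilbert module over itself, and (ii)$\Leftrightarrow$(iii) is handled via the GNS construction together with the Cauchy--Schwarz argument giving $\varphi(a^*a)=\|a\|^2$. If anything, you are slightly more careful than the paper about tracking the conjugate of $\lambda$ when passing between $(\pi(a)\xi\mid\pi(b)\xi)$ and $\varphi(a^*b)$, and about why the vector functional $c\mapsto(\pi(c)\xi\mid\xi)$ has norm exactly one in the non-unital case.
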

\begin{proof}
If ${\mathscr A}$ is regarded as a Hilbert ${\mathscr A}$-module then the equivalence (i)$\Longleftrightarrow$(ii) follows from Theorem \ref{th.21}.\\
To show (ii)$\Longrightarrow$(iii), suppose that there are a state $\varphi$ and $\lambda\in\mathbb{T}$ such that $\varphi(a^*b)=\lambda\|a\|\|b\|$. By the Cauchy--Schwarz inequality we have
$$\|a\|^2\|b\|^2=|\varphi(a^*b)|^2\leq\varphi(a^*a)\varphi(b^*b)\leq\|a\|^2\|b\|^2,$$
so $\varphi(a^*a)=\|a\|^2$. By \cite[Proposition 2.4.4]{dix} there exist a Hilbert space $\mathscr{H}$, a representation $\pi: {\mathscr A}\to \mathbb{B}(\mathscr{H})$ and a unit vector $\xi\in \mathscr{H}$ such that for any $c\in{\mathscr A}$ we have $\varphi(c)=(\pi(c)\xi\mid \xi).$ Hence $$\|\pi(a)\xi\|=\sqrt{(\pi(a)\xi\mid \pi(a)\xi)}=\sqrt{(\pi(a^* a)\xi\mid \xi)}=\sqrt{\varphi(a^*a)}=\|a\|,$$
and $$(\pi(b)\xi\mid \pi(a)\xi)=(\pi(a^* b)\xi\mid \xi)=\varphi(a^*b)=\lambda\|a\|\|b\|.$$
Finally, we show (iii)$\Rightarrow$(ii). Let condition (iii) holds and let $\varphi:{\mathscr A}\to \mathbb{C}$ be the state associated to $\pi$ and $\xi$ by $\varphi(c)=(\pi(c)\xi\mid \xi)$, $c\in {\mathscr A}$. Thus
$$\varphi(a^*b)=(\pi(a^* b)\xi\mid \xi)=(\pi(b)\xi\mid \pi(a)\xi)=\lambda\|a\|\|b\|.$$
\end{proof}
The proof of the following proposition is a modification of one given by Rieffel \cite[Theorem 3.10]{Re}.
\begin{proposition}\label{pr.24}
Let ${\mathscr A}$ be a $C^*$-algebra with identity $e$ and $\varepsilon\in[0, 1)$. Then for any $a\in {\mathscr A}$ the following statements are equivalent:\\
(i) $a\parallel^{\varepsilon} e$;\\
(ii) $\max\{\sqrt{\varphi(a^* a)-|\varphi(a)|^2}:\, \varphi\in S({\mathscr A})\}\leq\varepsilon\|a\|$
\end{proposition}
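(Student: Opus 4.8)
The plan is to translate the $\varepsilon$-parallelism condition $a\parallel^{\varepsilon} e$, which by definition reads $\inf\{\|a+\mu e\|:\mu\in\mathbb{C}\}\le\varepsilon\|a\|$, into a statement about states, exploiting that $\mathscr A$ has an identity $e$ so that $\mu e$ is a genuine scalar multiple of the unit. The natural bridge is the quantity $\inf_{\mu}\|a+\mu e\|^2$, and I want to show it equals $\max\{\varphi(a^*a)-|\varphi(a)|^2:\varphi\in S(\mathscr A)\}$; the proposition then follows by taking square roots and comparing with $\varepsilon\|a\|$. So the real content is the identity
\begin{eqnarray*}
\inf\{\|a+\mu e\|^2:\mu\in\mathbb{C}\}=\max\{\varphi(a^*a)-|\varphi(a)|^2:\varphi\in S(\mathscr A)\}.
\end{eqnarray*}

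First I would prove the inequality $\ge$. Fix any state $\varphi$. For each $\mu\in\mathbb{C}$, apply $\varphi$ to the positive element $(a+\mu e)^*(a+\mu e)$ and use $\varphi((a+\mu e)^*(a+\mu e))\le\|a+\mu e\|^2$. Expanding, $\varphi((a+\mu e)^*(a+\mu e))=\varphi(a^*a)+2\mathrm{Re}(\bar\mu\varphi(a))+|\mu|^2$; minimizing this scalar expression over $\mu\in\mathbb{C}$ (a completed-square computation, attained at $\mu=-\varphi(a)$) gives $\varphi(a^*a)-|\varphi(a)|^2$. Hence $\varphi(a^*a)-|\varphi(a)|^2\le\inf_{\mu}\|a+\mu e\|^2$ for every $\varphi\in S(\mathscr A)$, which yields $\ge$ (and incidentally shows the right-hand set is bounded above, so its supremum makes sense; that it is a max will come out of the reverse inequality).

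For the reverse inequality $\le$, I would argue as follows. Let $d^2:=\inf_{\mu}\|a+\mu e\|^2$ and pick $\mu_0$ attaining (or nearly attaining) this infimum — the infimum is in fact attained because $\mu\mapsto\|a+\mu e\|$ is continuous, convex, and coercive, exactly as in the proof of the $0$-parallelism proposition in the excerpt. Set $b:=a+\mu_0 e$, so $\|b\|^2=d^2$. By \cite[Theorem 3.3.6]{mor} there is a state $\varphi$ with $\varphi(b^*b)=\|b^*b\|=\|b\|^2=d^2$. Now I claim $\varphi(a^*a)-|\varphi(a)|^2\ge d^2$. Since $\mu_0$ is the minimizer of the real function $\mu\mapsto\varphi(a^*a)+2\mathrm{Re}(\bar\mu\varphi(a))+|\mu|^2$ is not automatic — this is the delicate point — one instead observes that $b=a+\mu_0 e$ minimizes $\|a+\mu e\|^2$, and uses a first-order optimality / variational condition at $\mu_0$ to pin down $\varphi(b)$. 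Concretely, replacing $\mu_0$ by $\mu_0+t$ for small real and imaginary $t$ and using $\|a+(\mu_0+t)e\|^2\ge\|b\|^2=\varphi(b^*b)$ together with $\varphi((a+(\mu_0+t)e)^*(a+(\mu_0+t)e))=\varphi(b^*b)+2\mathrm{Re}(\bar t\varphi(b))+|t|^2\le\|a+(\mu_0+t)e\|^2$, one is forced to conclude $\varphi(b)=0$. Then $\varphi(a)=\varphi(b-\mu_0 e)=-\mu_0$ and $\varphi(a^*a)=\varphi((b-\mu_0 e)^*(b-\mu_0 e))=\varphi(b^*b)-2\mathrm{Re}(\bar\mu_0\varphi(b))+|\mu_0|^2=d^2+|\mu_0|^2$, whence $\varphi(a^*a)-|\varphi(a)|^2=d^2+|\mu_0|^2-|\mu_0|^2=d^2$. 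This exhibits a state achieving the bound, simultaneously proving $\le$ and that the supremum is a maximum.

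The main obstacle is the step extracting $\varphi(b)=0$ from minimality of $\mu_0$: one must be careful that the state $\varphi$ (chosen for $b$, not varying with $\mu$) still "sees" the optimality condition. The key is that for every $t$ the chain $\|b\|^2\le\|a+(\mu_0+t)e\|^2$ and $\varphi((a+(\mu_0+t)e)^*(a+(\mu_0+t)e))\le\|a+(\mu_0+t)e\|^2$ is not tight enough by itself; instead I would use that $\varphi(b^*b)=\|b\|^2$ is the maximal value and run a Cauchy--Schwarz / numerical-range argument: $|\varphi(b^*(te))|^2\le\varphi(b^*b)\varphi(t^*e^*e t)$ combined with the expansion forces the cross term $2\mathrm{Re}(\bar t\varphi(b))$ to be dominated appropriately, and letting $t\to0$ along well-chosen directions kills $\varphi(b)$. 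Once the displayed identity is established, the proposition is immediate: $a\parallel^{\varepsilon}e$ iff $d\le\varepsilon\|a\|$ iff $\max_{\varphi}\sqrt{\varphi(a^*a)-|\varphi(a)|^2}\le\varepsilon\|a\|$.
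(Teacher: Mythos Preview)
Your direction (i)$\Rightarrow$(ii) matches the paper's argument essentially verbatim. The gap is in the reverse inequality. You pick $\mu_0$ minimizing $\|a+\mu e\|$, set $b=a+\mu_0 e$, and then choose (via \cite[Theorem 3.3.6]{mor}) \emph{some} state $\varphi$ with $\varphi(b^*b)=\|b\|^2$; you then attempt to deduce $\varphi(b)=0$ from the minimality of $\mu_0$. This cannot work for an arbitrary such state. Take $\mathscr A=M_2(\mathbb{C})$ and $b=\mathrm{diag}(1,-1)$: then $b^*b=I$, so \emph{every} state satisfies $\varphi(b^*b)=1=\|b\|^2$, and $\mu_0=0$ is indeed the minimizer of $\|b+\mu e\|$; yet the vector state $\varphi(X)=X_{11}$ gives $\varphi(b)=1\neq0$. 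Your variational/Cauchy--Schwarz sketch, as written, only yields the trivial bound $|\varphi(b)|\le\|b\|$ and cannot be completed to force $\varphi(b)=0$: the right state has to be \emph{chosen} with both properties, not picked first and analyzed afterwards. You correctly flag this as ``the delicate point'', but the proposed fix does not close it.

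The paper handles exactly this by invoking \cite[Theorem 2.7]{ar.2}, the Aramba\v{s}i\'{c}--Raji\'{c} characterization of Birkhoff--James orthogonality in Hilbert $C^*$-modules: since $\|b+\mu e\|\ge\|b\|$ for all $\mu$ (minimality at $\mu_0$), that theorem supplies a state $\varphi_\alpha$ satisfying \emph{both} $\varphi_\alpha(b^*b)=\|b\|^2$ and $\varphi_\alpha(b^*e)=0$ simultaneously, whence $\varphi_\alpha(a)=-\mu_0$. From there your computation $\varphi_\alpha(a^*a)-|\varphi_\alpha(a)|^2=d^2$ goes through verbatim. So the missing ingredient is precisely this orthogonality-via-states lemma (or an equivalent Hahn--Banach supporting-functional argument producing the correct state); without it the $\le$ direction is not established.
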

\begin{proof}
(i)$\Longrightarrow$(ii) For every $\varphi\in S({\mathscr A})$ and $\mu\in\mathbb{C}$ a direct calculation shows that
\begin{align*}
\sqrt{\varphi(a^* a)-|\varphi(a)|^2}&=\sqrt{\varphi\Big((a+\mu e)^* (a+\mu e)\Big)-|\varphi(a+\mu e)|^2}
\\&\leq\sqrt{\varphi\Big((a+\mu e)^* (a+\mu e)\Big)}
\leq\|a+\mu e\|.
\end{align*}
So $\max\{\sqrt{\varphi(a^* a)-|\varphi(a)|^2}:\, \varphi\in S({\mathscr A})\}\leq\inf\{\|a+\mu e\| :\, \mu\in\mathbb{C}\}$. Since $a\parallel^{\varepsilon} e$, hence $\inf\{\|a+\mu e\| :\, \mu\in\mathbb{C}\}\leq\varepsilon\|a\|$. Thus $$\max\{\sqrt{\varphi(a^* a)-|\varphi(a)|^2}:\, \varphi\in S({\mathscr A})\}\leq\varepsilon\|a\|.$$
(ii)$\Longrightarrow$(i) Let (ii) holds and let $\inf\{\|a+\mu e\| :\, \mu\in\mathbb{C}\}=\|a+\alpha e\|$ for some $\alpha\in\mathbb{C}$. Then for any $\mu\in\mathbb{C}$ we have $\|(a+\alpha e)+\mu e\|\geq\|a+\alpha e\|$, whence by \cite[Theorem 2.7]{ar.2}, there exists a state $\varphi_\alpha\in S({\mathscr A})$ such that $$\sqrt{\varphi_\alpha\Big((a+\alpha e)^* (a+\alpha e)\Big)}=\|a+\alpha e\| \quad \mbox{and} \quad \varphi_\alpha(a)=-\alpha.$$
Therefore
\begin{align*}
\inf\{\|a+\mu e\| :\, \mu\in\mathbb{C}\}&=\|a+\alpha e\|
=\sqrt{\varphi_\alpha\Big((a+\alpha e)^* (a+\alpha e)\Big)}
\\&=\sqrt{\varphi_\alpha(a^* a)+\overline{\alpha}\varphi_\alpha(a)+\alpha\varphi_\alpha(a^*)+|\alpha|^2}
=\sqrt{\varphi_\alpha(a^* a)-|\varphi_\alpha(a)|^2}
\\&\leq\max\{\sqrt{\varphi(a^* a)-|\varphi(a)|^2}:\, \varphi\in S({\mathscr A})\}
\end{align*}
Thus $\inf\{\|a+\mu e\| :\, \mu\in\mathbb{C}\}\leq\varepsilon\|a\|$, or equivalently, $a\parallel^{\varepsilon} e$.
\end{proof}
In the following result, we utilize the linking algebra to give some equivalence assertions regarding parallel elements in a Hilbert $C^*$-module.
\begin{theorem}\label{th.22}
Let $\mathscr{X}$ be a Hilbert ${\mathscr A}$-module and $x , y\in \mathscr{X}$. Then the following statements are mutually equivalent:\\
(i) $x\parallel y$;\\
(ii) ${\langle x, x\rangle}\parallel{\langle x, y\rangle}$ and $\|{\langle x, y\rangle}\|=\|x\|\|y\|;$\\
(iii) $r({\langle x, y\rangle})=\|{\langle x, y\rangle}\|=\|x\|\|y\|;$\\
(iv) $\|{\langle x, x+\lambda y\rangle}\|=\|x\|(\|x\|+\|y\|)$ for some $\lambda\in\mathbb{T}$.
\end{theorem}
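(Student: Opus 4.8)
\textbf{Proof proposal for Theorem \ref{th.22}.}

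The plan is to transfer the problem into the linking algebra $\mathbb{L}(\mathscr{X})$ and then invoke the operator-parallelism machinery already developed. The key observation is that $\mathbb{L}(\mathscr{X})$ is a $C^*$-algebra, that the embeddings $x\mapsto r_x$ and $a\mapsto T_a$ are isometric, and that $\langle x, y\rangle$ becomes the product $l_x r_y$ inside $\mathbb{L}(\mathscr{X})$. Writing $X=\begin{bmatrix} 0 & 0 \\ r_x & 0\end{bmatrix}$ and $Y=\begin{bmatrix} 0 & 0 \\ r_y & 0\end{bmatrix}$, one checks that $\|X\|=\|x\|$, $\|Y\|=\|y\|$, and that $x\parallel y$ in $\mathscr{X}$ holds precisely when $X\parallel Y$ in $\mathbb{L}(\mathscr{X})$, because $\|X+\lambda Y\|=\|r_x+\lambda r_y\|=\|r_{x+\lambda y}\|=\|x+\lambda y\|$. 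A short computation also gives $X^*X=\begin{bmatrix} l_x r_x & 0 \\ 0 & 0\end{bmatrix}$, i.e. $X^*X$ corresponds to $\langle x, x\rangle\in\mathscr A$, and $X^*Y$ corresponds to $\langle x, y\rangle$; in particular $\|X^*Y\|=\|\langle x, y\rangle\|$ and $r(X^*Y)=r(\langle x, y\rangle)$ (the latter because the nonzero spectrum is unchanged under the corner embedding).

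Once this dictionary is set up, the four conditions (i)--(iv) of Theorem \ref{th.22} are exactly the translations of conditions (i), (iv), (iii) (via Corollary \ref{co.15}), and (v) of Theorem \ref{th.13} applied to $T_1=X$, $T_2=Y$ in $\mathbb{B}(\mathscr{X}\oplus\mathscr A)$ — noting that $\mathbb{L}(\mathscr{X})$ sits as a $C^*$-subalgebra so norms and spectra computed there agree with those in the ambient $\mathbb{B}(\mathscr{X}\oplus\mathscr A)$. Concretely: (i)$\Leftrightarrow$ $X\parallel Y$; condition (iv) of Theorem \ref{th.13}, namely $X^*X\parallel X^*Y$ with $\|X^*Y\|=\|X\|\,\|Y\|$, becomes $\langle x, x\rangle\parallel\langle x, y\rangle$ with $\|\langle x, y\rangle\|=\|x\|\,\|y\|$, which is (ii); Corollary \ref{co.15}(ii), $r(X^*Y)=\|X^*Y\|=\|X\|\,\|Y\|$, becomes (iii); and condition (v) of Theorem \ref{th.13}, $\|X^*(X+\lambda Y)\|=\|X\|(\|X\|+\|Y\|)$ for some $\lambda\in\mathbb{T}$, becomes $\|\langle x, x+\lambda y\rangle\|=\|x\|(\|x\|+\|y\|)$, which is (iv). Thus the chain of equivalences in Theorem \ref{th.13} and Corollary \ref{co.15} yields all the asserted equivalences at once.

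The main obstacle is the bookkeeping for the identifications: one must verify carefully that $X^*X$, $X^*Y$, $X^*(X+\lambda Y)$ really do land in the appropriate corners of $\mathbb{L}(\mathscr{X})$ and that, under the isomorphism $a\mapsto T_a$ of $\mathscr A$ with $\mathbb{K}(\mathscr A)$, their norms and spectral radii match $\|\langle x, \cdot\rangle\|$ and $r(\langle x, \cdot\rangle)$. Here one uses the explicit relations $r_x^*=l_x$, $l_x r_y = T_{\langle x, y\rangle}$, and the fact that for a positive element, or more generally for the nonzero part of the spectrum, passing to a full corner of a $C^*$-algebra does not change the spectral radius. A minor care point is that $\|x\|$ may be $0$; but if $x=0$ both sides of every condition are trivially satisfied, so we may assume $x\neq0$ throughout, exactly as in the proof of Theorem \ref{th.13}. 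With these identifications in hand, no new analysis is needed — the result is a clean corollary of the operator case.
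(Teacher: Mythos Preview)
Your approach is essentially the same as the paper's: embed $x,y$ in the linking algebra $\mathbb{L}(\mathscr{X})$ as $X,Y$, identify $X^*X$ and $X^*Y$ with $\langle x,x\rangle$ and $\langle x,y\rangle$ via $l_xr_y=T_{\langle x,y\rangle}$, and then read off the equivalences from Theorem~\ref{th.13} and Corollary~\ref{co.15}. The one point that needs tightening is your choice of ambient algebra: Theorem~\ref{th.13} is stated and proved for $\mathbb{B}(\mathscr{H})$ with $\mathscr{H}$ a genuine Hilbert space, whereas $\mathbb{B}(\mathscr{X}\oplus\mathscr A)$ is the algebra of adjointable operators on a Hilbert $C^*$-module, so you cannot invoke Theorem~\ref{th.13} there as written. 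The paper handles exactly this by first passing through a non-degenerate faithful representation $\pi:\mathbb{L}(\mathscr{X})\to\mathbb{B}(\mathscr{H})$ and applying Theorem~\ref{th.13} to $\pi(X),\pi(Y)$; since $\pi$ is an isometric $*$-homomorphism it preserves norms, parallelism, and spectral radii, so the dictionary you set up survives intact. Once you insert that step your argument matches the paper's. (The paper also elects to prove (ii)$\Rightarrow$(iv) and (iv)$\Rightarrow$(i) by a short direct computation with the Cauchy--Schwarz inequality rather than via condition~(v) of Theorem~\ref{th.13}, but this is purely cosmetic.)
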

\begin{proof}
Consider the elements
$\begin{bmatrix}
0 & 0 \\
r_x & 0
\end{bmatrix}$
and
$\begin{bmatrix}
0 & 0 \\
r_y & 0
\end{bmatrix}$
of the $C^*$-algebra $\mathbb{L}(\mathscr{X})$, the linking algebra of $\mathscr{X}$. Let $\pi: \mathbb{L}(\mathscr{X})\to \mathbb{B}(\mathscr{H})$ be a non-degenerate faithful representation of $\mathbb{L}(\mathscr{X})$ on some Hilbert space $\mathscr{H}$ \cite[Theorem 2.6.1]{dix}.\\
(i)$\Longleftrightarrow$(ii) A straightforward computation shows that
$$x\parallel y \,\Longleftrightarrow\, \begin{bmatrix}
0 & 0 \\
r_x & 0
\end{bmatrix} \,\Big\| \,\begin{bmatrix}
0 & 0 \\
r_y & 0
\end{bmatrix} \,\Longleftrightarrow\, \pi\left(\begin{bmatrix}
0 & 0 \\
r_x & 0
\end{bmatrix}\right) \,\Big\| \,\pi\left(\begin{bmatrix}
0 & 0 \\
r_y & 0
\end{bmatrix}\right).$$
Thus by Theorem \ref{th.13}, we get
\begin{align*}
x\parallel y &\,\Longleftrightarrow\, \pi\left(\begin{bmatrix}
0 & 0 \\
r_x & 0
\end{bmatrix}\right) \,\Big\| \,\pi\left(\begin{bmatrix}
0 & 0 \\
r_y & 0
\end{bmatrix}\right)
\\&\,\Longleftrightarrow\,\pi\left(\begin{bmatrix}
0 & 0 \\
r_x & 0
\end{bmatrix}\right)^*\pi\left(\begin{bmatrix}
0 & 0 \\
r_x & 0
\end{bmatrix}\right) \,\Big\| \,\pi\left(\begin{bmatrix}
0 & 0 \\
r_x & 0
\end{bmatrix}\right)^*\pi\left(\begin{bmatrix}
0 & 0 \\
r_y & 0
\end{bmatrix}\right)
\\&\qquad\mbox{and}\,\,\left\|\pi\left(\begin{bmatrix}
0 & 0 \\
r_x & 0
\end{bmatrix}\right)^*\pi\left(\begin{bmatrix}
0 & 0 \\
r_y & 0
\end{bmatrix}\right)\right\|=\left\|\pi\left(\begin{bmatrix}
0 & 0 \\
r_x & 0
\end{bmatrix}\right)\right\|\,\left\|\pi\left(\begin{bmatrix}
0 & 0 \\
r_y & 0
\end{bmatrix}\right)\right\|
\\&\,\Longleftrightarrow\,\pi\left(\begin{bmatrix}
0 & l_xr_x \\
0 & 0
\end{bmatrix}\right) \,\Big\| \,\pi\left(\begin{bmatrix}
0 & l_xr_y \\
0 & 0
\end{bmatrix}\right)
\\&\qquad\mbox{and}\,\,\left\|\pi\left(\begin{bmatrix}
0 & l_xr_x \\
0 & 0
\end{bmatrix}\right)\right\|=\left\|\pi\left(\begin{bmatrix}
0 & l_x \\
0 & 0
\end{bmatrix}\right)\right\|\,\left\|\pi\left(\begin{bmatrix}
0 & 0 \\
r_y & 0
\end{bmatrix}\right)\right\|
\\&\,\Longleftrightarrow\,\begin{bmatrix}
0 & T_{{\langle x, x\rangle}} \\
0 & 0
\end{bmatrix} \,\Big\| \,\begin{bmatrix}
0 & T_{{\langle x, y\rangle}} \\
0 & 0
\end{bmatrix}\quad\mbox{and}\,\,\left\|\begin{bmatrix}
0 & T_{{\langle x, y\rangle}} \\
0 & 0
\end{bmatrix}\right\|=\left\|\begin{bmatrix}
0 & l_x \\
0 & 0
\end{bmatrix}\right\|\,\left\|\begin{bmatrix}
0 & 0 \\
r_y & 0
\end{bmatrix}\right\|
\\&\,\Longleftrightarrow\,{\langle x, x\rangle}\parallel{\langle x, y\rangle}\quad\mbox{and}\,\,\|{\langle x, y\rangle}\|=\|x\|\,\|y\|.
\end{align*}
(ii)$\Longleftrightarrow$(iii) By the equivalence (iii)$\Longleftrightarrow$(iv) of Theorem \ref{th.13}, the proof is similar to the proof of the equivalence (i)$\Longleftrightarrow$(ii), so we omit it.\\
(ii)$\Longrightarrow$(iv) Since ${\langle x, x\rangle}\parallel{\langle x, y\rangle}$, we have $\|{\langle x, x\rangle}+ \lambda{\langle x, y\rangle}\|= \|{\langle x, x\rangle}\|+\|{\langle x, y\rangle}\|$ for some $\lambda\in\mathbb{T}$. It follows from  $\|{\langle x, y\rangle}\|=\|x\|\|y\|$ that
$$\|{\langle x, x+\lambda y\rangle}\|=\|{\langle x, x\rangle}+ \lambda{\langle x, y\rangle}\|= \|{\langle x, x\rangle}\|+\|{\langle x, y\rangle}\|=\|x\|(\|x\|+\|y\|).$$
(iv)$\Longrightarrow$(i) We may assume that $x\neq 0$. Due to $\|{\langle x, x+\lambda y\rangle}\|=\|x\|(\|x\|+\|y\|)$ for some $\lambda\in\mathbb{T}$, by the Cauchy--Schwarz inequality, we have
\begin{align*}
\|x\|(\|x\|+\|y\|)=\|{\langle x, x+\lambda y\rangle}\|\leq\|x\|\|x+\lambda y\|\leq\|x\|(\|x\|+\|y\|).
\end{align*}
Thus $\|x+\lambda y\|=\|x\|+\|y\|$. Hence $x\parallel y$.
\end{proof}
Now, by Theorem \ref{th.16} and the same technique used to prove Theorem \ref{th.22} the final result is obtained.
\begin{corollary}\label{co.25}
Let $\mathscr{X}$ be a Hilbert ${\mathscr A}$-module, $x, y\in \mathscr{X}$ and $\varepsilon\in[0, 1)$. If $x\parallel^{\varepsilon} y$ then
$$|\varphi({\langle x, y\rangle})|^2\geq\varphi({\langle x, x\rangle})\varphi({\langle y, y\rangle})-\varepsilon^2\|{\langle x, x\rangle}\|\|{\langle y, y\rangle}\|\,\,\,\,\,\,\,(\varphi\in S({\mathscr A})).$$
\end{corollary}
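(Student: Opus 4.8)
The plan is to transport the statement to the algebra $\mathbb{B}(\mathscr{H})$ via the linking algebra $\mathbb{L}(\mathscr{X})$ — exactly the device used in the proof of Theorem~\ref{th.22} — and then feed the outcome into the implication (i)$\Rightarrow$(v) of Theorem~\ref{th.16}. First I would consider, as in that proof, the elements $\widehat x=\begin{bmatrix}0&0\\ r_x&0\end{bmatrix}$ and $\widehat y=\begin{bmatrix}0&0\\ r_y&0\end{bmatrix}$ of $\mathbb{L}(\mathscr{X})$. Using $r_x^{*}=l_x$ together with $l_xr_y=T_{\langle x,y\rangle}$, one computes $\widehat x^{*}\widehat x=\begin{bmatrix}T_{\langle x,x\rangle}&0\\0&0\end{bmatrix}$, $\widehat y^{*}\widehat y=\begin{bmatrix}T_{\langle y,y\rangle}&0\\0&0\end{bmatrix}$ and $\widehat y^{*}\widehat x=\begin{bmatrix}T_{\langle y,x\rangle}&0\\0&0\end{bmatrix}$, while $\|\widehat x\|=\|x\|$ and $\|\widehat y\|=\|y\|$ because $\mathscr{X}\simeq\begin{bmatrix}0&0\\ \mathscr{X}&0\end{bmatrix}$ isometrically. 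Then I would fix a faithful non-degenerate representation $\pi\colon\mathbb{L}(\mathscr{X})\to\mathbb{B}(\mathscr{H})$ as in Theorem~\ref{th.22}, but chosen large enough (for instance the universal representation, or the direct sum of a faithful representation with the GNS representation attached to the state that will be used) that every state of $\mathbb{L}(\mathscr{X})$ is realized as a vector state of $\pi$.

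Put $T_1=\pi(\widehat x)$ and $T_2=\pi(\widehat y)$. Since $\pi$ and $x\mapsto\widehat x$ are isometric we have $\|T_1\|=\|x\|$, $\|T_2\|=\|y\|$, and $\inf\{\|T_1+\mu T_2\|:\,\mu\in\mathbb{C}\}=\inf\{\|x+\mu y\|:\,\mu\in\mathbb{C}\}\le\varepsilon\|x\|=\varepsilon\|T_1\|$, so $x\parallel^{\varepsilon}y$ forces $T_1\parallel^{\varepsilon}T_2$. By Theorem~\ref{th.16}(v), for every unit vector $\xi\in\mathscr{H}$,
$$|(T_1\xi\mid T_2\xi)|^{2}\ \ge\ \|T_1\xi\|^{2}\,\|T_2\xi\|^{2}-\varepsilon^{2}\|T_1\|^{2}\|T_2\|^{2}\ =\ \|T_1\xi\|^{2}\,\|T_2\xi\|^{2}-\varepsilon^{2}\|x\|^{2}\|y\|^{2}.$$
Now let $\varphi\in S(\mathscr A)$ be arbitrary. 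Regarding $\mathscr A$ as the $C^{*}$-subalgebra $\begin{bmatrix}\mathscr A&0\\0&0\end{bmatrix}$ of $\mathbb{L}(\mathscr{X})$, extend $\varphi$ to a state $\widetilde\varphi$ of $\mathbb{L}(\mathscr{X})$ (state extension theorem) and pick a unit vector $\xi\in\mathscr{H}$ with $\widetilde\varphi(\cdot)=(\pi(\cdot)\xi\mid\xi)$. Then $\|T_1\xi\|^{2}=(\pi(\widehat x^{*}\widehat x)\xi\mid\xi)=\widetilde\varphi(\widehat x^{*}\widehat x)=\varphi(\langle x,x\rangle)$, and likewise $\|T_2\xi\|^{2}=\varphi(\langle y,y\rangle)$ and $(T_1\xi\mid T_2\xi)=(\pi(\widehat y^{*}\widehat x)\xi\mid\xi)=\varphi(\langle y,x\rangle)=\overline{\varphi(\langle x,y\rangle)}$; moreover $\|x\|^{2}=\|\langle x,x\rangle\|$ and $\|y\|^{2}=\|\langle y,y\rangle\|$. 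Substituting these equalities into the displayed inequality gives
$$|\varphi(\langle x,y\rangle)|^{2}\ \ge\ \varphi(\langle x,x\rangle)\,\varphi(\langle y,y\rangle)-\varepsilon^{2}\|\langle x,x\rangle\|\,\|\langle y,y\rangle\|,$$
which is exactly the asserted inequality, and $\varphi$ was arbitrary.

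The bookkeeping here (the matrix identities in $\mathbb{L}(\mathscr{X})$, the isometry of $x\mapsto\widehat x$, the contractivity and faithfulness of $\pi$, and the translation between module inner products and the operators $T_1^{*}T_1,\ T_2^{*}T_1,\dots$) is routine given the preliminaries. The single point that needs care, and the main obstacle, is the choice of $\pi$: Theorem~\ref{th.16}(v) is phrased in terms of vector states of a Hilbert space, so one must arrange that the \emph{arbitrary} state $\varphi$ of $\mathscr A$, once extended to $\mathbb{L}(\mathscr{X})$, genuinely appears as $(\pi(\cdot)\xi\mid\xi)$ for some unit vector $\xi$; this is why one takes a sufficiently large (e.g.\ universal) faithful representation rather than an arbitrary one.
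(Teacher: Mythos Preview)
Your proof is correct and follows precisely the route the paper intends: it merely says ``by Theorem~\ref{th.16} and the same technique used to prove Theorem~\ref{th.22}'', i.e.\ pass to the linking algebra, represent faithfully on a Hilbert space, and invoke Theorem~\ref{th.16}(v). You have in fact made explicit a point the paper suppresses --- that the representation must be chosen so that the (extension of the) arbitrary state $\varphi$ is realized as a vector state, which is guaranteed by taking the universal representation --- and this is exactly the right clarification.
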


\bibliographystyle{amsplain}

\end{document}